\documentclass[10pt]{article} 
\usepackage[accepted]{tmlr}


\usepackage{amsmath,amsfonts,bm}









\def\eqref#1{equation~\ref{#1}}









\def\1{\bm{1}}










\DeclareMathAlphabet{\mathsfit}{\encodingdefault}{\sfdefault}{m}{sl}
\SetMathAlphabet{\mathsfit}{bold}{\encodingdefault}{\sfdefault}{bx}{n}













\usepackage[hidelinks]{hyperref}
\usepackage{url}

\usepackage{graphicx}
\usepackage{amsmath}
\usepackage{amsthm}
\usepackage{amsfonts}
\usepackage{algorithm}
\usepackage{mathtools}
\usepackage{makecell}
\usepackage{multirow}
\usepackage{subcaption}
\usepackage{booktabs}
\usepackage{csquotes}

\newtheorem{thm}{Theorem}
\newtheorem{proposition}[thm]{Proposition}
\newtheorem{lemma}[thm]{Lemma}      
\newtheorem{corollary}[thm]{Corollary}

\usepackage{color,xcolor}
\definecolor{cc}{rgb}{0.9,0.9,0.9}

\let\classAND\AND
\let\AND\relax
\usepackage{algorithmic}

\let\AND\classAND
\AtBeginEnvironment{algorithmic}{\let\AND\algoAND}

\title{Solving Quadratic Programs via Deep Unrolled \\ Douglas-Rachford Splitting}

\usepackage{authblk}

\author[1,2]{Jinxin Xiong}
\author[3]{Xi Gao}
\author[1,2]{Linxin Yang}
\author[3]{Jiang Xue}
\author[1,2]{Xiaodong Luo}
\author[ ]{Akang Wang\textsuperscript{1,2}\thanks{Corresponding Author: Akang Wang <wangakang@sribd.cn>}}
\affil[1]{School of Data Science, The Chinese University of Hong Kong, Shenzhen, China}
\affil[2]{Shenzhen International Center for Industrial and Applied Mathematics, Shenzhen Research Institute of Big Data, China
}
\affil[3]{School of Mathematics and Statistics, Xi'an Jiaotong University, China}


\begin{document}

\maketitle

\begin{abstract}
Convex quadratic programs (QPs) are fundamental to numerous applications, including finance, engineering, and energy systems. Among the various methods for solving them, the Douglas-Rachford (DR) splitting algorithm is notable for its robust convergence properties. Concurrently, the emerging field of Learning-to-Optimize offers promising avenues for enhancing algorithmic performance, with algorithm unrolling receiving considerable attention due to its computational efficiency and interpretability. In this work, we propose an approach that unrolls a modified DR splitting algorithm to efficiently learn solutions for convex QPs. Specifically, we introduce a tailored DR splitting algorithm that replaces the computationally expensive linear system-solving step with a simplified gradient-based update, while retaining convergence guarantees. Consequently, we unroll the resulting DR splitting method and present a well-crafted neural network architecture to predict QP solutions. Our method achieves up to 50\% reductions in iteration counts and 40\% in solve time across benchmarks on both synthetic and real-world QP datasets, demonstrating its scalability and superior performance in enhancing computational efficiency across varying sizes.
\end{abstract}

\section{Introduction}
\label{sec:Intro}
Convex Quadratic Programs (QPs) are optimization problems characterized by a convex quadratic objective function and linear constraints. 
These problems form a fundamental class of optimization tasks with applications spanning finance~\citep{markowitz1952port,boyd2017multi}, engineering~\citep{garcia1989model}, energy systems~\citep{frank2016introduction}, and machine learning~\citep{cortes1995support,tibshirani1996regression}. 
Additionally, convex QPs serve as the basis for constructing sequential convex approximations or relaxations of nonconvex problems, ensuring tractable subproblems while progressively approaching high-quality solutions~\citep{boyd2004convex}.

In recent years, there has been increasing interest in the development of \textit{first-order methods} for convex QPs, with OSQP~\citep{stellato2020osqp} and Splitting Conic Solver (SCS)~\citep{o2021operator} standing out as state-of-the-art solvers. 
These methods are closely related to the classical Douglas-Rachford (DR) splitting algorithm~\citep{douglas1956numerical}. 
Notably, they require only a few matrix factorizations, which can be reused across iterations.
More recently,~\citet{lu2023pdqp} extended the Primal-Dual Hybrid Gradient (PDHG) algorithm to effectively solve convex QPs without requiring any matrix factorization.
These first-order methods have demonstrated superior efficiency compared to traditional approaches like Active-Set methods~\citep{wolfe1959simplex} and Interior Point Methods (IPMs)~\citep{nesterov1994interior}, especially for large-scale problems. 

With the emergence of \textit{Learn-to-Optimize} (L2O) methods, innovative strategies have been proposed to accelerate the solving of optimization problems~\citep{chen2022learning,gasse2022machine}. 
By training models on diverse problem instances, L2O aims to learn problem-specific optimization strategies that surpass traditional methods in terms of efficiency and performance.
In particular, the recent work of~\citet{yang2024efficient} proposed to unroll the matrix-free PDHG algorithm in~\citet{lu2023pdqp}.
This approach enables a relatively shallow network to efficiently learn the optimal QP solutions that would otherwise require thousands of PDHG iterations.
However, directly unrolling DR splitting-based algorithms to efficiently learn QP solutions presents challenges, primarily due to the need to solve a linear system at each iteration.
This raises the following question:
\begin{center}
    \textit{How to unroll DR splitting-based algorithms for solving convex QPs?}
\end{center}

In this work, we propose unrolling a modified DR splitting algorithm in which the linear system-solving step is replaced by an equivalent least-squares problem, eliminating the need for matrix factorization. 
Rather than solving the least-squares problem exactly, we apply a single-step gradient descent update. 
Consequently, the tailored DR splitting algorithm can be seamlessly unrolled into a learnable network, delivering high-quality solutions for convex QPs.
The distinct contributions of this work are as follows:

\begin{itemize}
    \item \textbf{Unrolled DR Splitting:} We present a well-crafted neural network framework that leverages the unrolling of a modified DR splitting algorithm to efficiently solve convex QPs.
    \item \textbf{Convergence Guarantee:} We present the convergence properties of the modified DR splitting algorithm and demonstrate that the unrolled network can recover the optimal QP solutions.
    \item \textbf{Empirical Evaluation:} 
    We validate the proposed framework through extensive experiments.
    The resulting solutions are employed to warm-start the QP solver SCS.
    The results indicate that our method significantly enhances the efficiency of solving convex QPs. 
    In particular, our approach reduces the number of iterations by up to 50\% and the solve time by up to 40\%, demonstrating its effectiveness across diverse benchmarks.
\end{itemize}

\section{Related Works}
\paragraph{Learning-Accelerated QP Algorithms.}
Recently, machine learning techniques have been applied to develop more effective policies within algorithms, surpassing the performance of traditional hand-crafted heuristic methods.
For example,~\citet{ichnowski2021accelerating,jung2022learning} employed reinforcement learning to choose hyperparameters in the OSQP solver, leading to faster convergence compared to the original update rules. 
\citet{venkataraman2021neural} used neural networks to learn acceleration methods in fixed-point algorithms.
While both approaches demonstrated improved convergence rates empirically, the convergence can not be theoretically ensured.

\paragraph{Algorithm Unrolling.}
Algorithm unrolling is a technique that bridges classical iterative optimization methods and deep learning by transforming the iterative steps of an optimization algorithm into the layers of a neural network. This approach enables end-to-end training, combining the strengths of both paradigms. Unrolled networks are not only interpretable, but they also tend to require fewer parameters and less training data compared to traditional deep learning models~\citep{monga2021algorithm}.
By leveraging a few layers of unrolled networks, these methods can often learn solutions that would otherwise require thousands of iterations in classical algorithms.
Significant success has been achieved in applications such as sparse coding~\citep{gregor2010ista}, compressive sensing~\citep{sun2016deep}, inverse problems in imaging~\citep{aljadaany2019douglas,liu2020learnable,su2024transformer} and communication systems~\citep{sun2023douglas}.
However, these methods are not applicable to general optimization algorithms.
Recently,~\citet{lipdhg, yang2024efficient} proposed unrolling the PDHG algorithm into neural networks to address general linear programs and QPs, establishing theoretical results in this context. 
To the best of our knowledge, such theoretical advancements have not been extended to other efficient first-order methods, such as DR splitting-based algorithms.

\paragraph{Learning Warm-Starts.} 
An effective strategy to accelerate solution processes is to provide high-quality initializations. 
For example, \citet{baker2019learning, diehl2019warm, zhang2022learning} proposed predicting initial points to warm-start the solution process for power system applications. However, these methods mainly focus on solution mappings and do not address the specific needs of warm-starting nonlinear programming algorithms.
Highly related works of~\citet{sambharya2023end,sambharya2024learning} train fully-connected neural networks by minimizing loss after taking several steps of the fixed-point algorithms. 
By incorporating additional algorithmic steps following the warm-start, the predictions are refined for downstream processes. 
However, due to the need for vectorizing problem parameters and solving linear systems during training, this method would struggle when addressing large-scale problems.
In contrast to fixed-point algorithms,~\citet{gao2024ipm} introduced IPM-LSTM, which replaces the time-consuming process of solving linear systems in IPMs with Long Short-Term Memory (LSTM) neural networks. 
While this approach shows promise in providing well-centered primal-dual solution pairs for warm-starting IPM solvers, the overhead of LSTM inference limits its scalability to larger instances.

\section{Preliminaries}
\label{sec:pre}

\subsection{Quadratic Programs}
In this work, we consider convex QPs of the following form:
\begin{equation}
\label{eq:qcp}
    \begin{array}{ll}
        \underset{x\in\mathbb{R}^n, s\in\mathcal{K}}{\min} & \frac{1}{2} x^{\top} P x + c^{\top} x \\
        \quad \; \text{s.t.} & A x + s = b, \\
                          & s \in \mathcal{K},
    \end{array}
\end{equation}
where $P\in\mathbb{R}^{n\times n}$, $P = P^\top\succeq 0$, $A \in \mathbb{R}^{m \times n}$, $c \in \mathbb{R}^n$, and $b \in \mathbb{R}^m$ are the problem parameters, with $x \in \mathbb{R}^n$ and $s \in \mathcal{K} \coloneqq  \mathbb{R}_{+}^m$ denoting decision variables. 

\subsection{Douglas-Rachford Splitting}
For convex QPs, the following Karush-Kuhn-Tucker (KKT) conditions are necessary and sufficient for optimality:
\begin{equation*}
    \begin{aligned}
        & Ax+s=b, \quad \quad \quad Px+A^\top y + c = 0, \\ 
        & s\in\mathcal{K}, y \in \mathcal{K}^*,
        \quad \quad s \perp y,
    \end{aligned}
\end{equation*}
where $y$ is the dual variable and $\mathcal{K}^*$ is the dual cone to $\mathcal{K}$, defined as 
$\mathcal{K}^* = \left\{v|v^\top z \geq 0, \forall z \in \mathcal{K}\right\}$.
Let
\begin{equation*}
\begin{aligned}
&u \coloneqq \begin{bmatrix}
x \\
y
\end{bmatrix}, 
&& \; M \coloneqq \begin{bmatrix}
P & A^{\top} \\
-A & 0
\end{bmatrix}, \\
&q \coloneqq \begin{bmatrix}
c \\
b
\end{bmatrix}, 
&&\; \mathcal{C} \coloneqq \mathbb{R}^n \times \mathcal{K}^*.
\end{aligned}
\end{equation*}
Since $P\succeq 0$, $M+M^\top \succeq 0$, i.e., $M$ is monotone.
Consequently, the KKT conditions can be expressed as the following monotone inclusion problem:
\begin{equation}
\label{eq:inclusion}
    0 \in M u + q + N_{\mathcal{C}}(u),
\end{equation}
where $N_\mathcal{C}(u)$ represents the normal cone of $\mathcal{C}$, and is defined by
$$
N_{\mathcal{C}}(u) \coloneqq \begin{cases}\left\{v \mid(z-u)^{\top} v \leq 0\right.,\quad \forall z \in \mathcal{C}\}, & u \in \mathcal{C}, \\ \emptyset, & u \notin \mathcal{C} .\end{cases}
$$
The optimal solutions for problem~(\ref{eq:qcp}) can thus be obtained by solving the monotone inclusion problem~(\ref{eq:inclusion}).
\begin{algorithm}[ht]
\caption{Douglas-Rachford Splitting}\label{alg:dr}
\begin{algorithmic}[1]
\STATE \textbf{Input:} $M$, $q$, $\mathcal{C}$
\STATE \textbf{Initialize:} $w^0 \gets \mathbf{0}$ 
\FOR{$k=0,1,\cdots$}
    \STATE $\tilde{u}^{k+1} \gets (I+M)^{-1} (w^k - q)$ \label{line:inverse} 
    \STATE $u^{k+1} \gets \Pi_\mathcal{C}\left(2 \tilde{u}^{k+1} - w^k\right)$
    \STATE $w^{k+1} \gets w^k + (u^{k+1} - \tilde{u}^{k+1})$
\ENDFOR
\STATE \textbf{Return:} $u^k \coloneqq (x^k, y^k)$
\end{algorithmic}
\end{algorithm}

To address problem~(\ref{eq:inclusion}), the well-known DR splitting algorithm~\citep{douglas1956numerical}, as outlined in Algorithm~\ref{alg:dr}, can be applied.
In each iteration, an intermediate point $ \tilde{u}^{k+1} $ is computed by solving a linear system, which is always solvable due to the full rank of $ I + M $. The updated point $ u^{k+1} $ is then obtained by reflecting $ \tilde{u}^{k+1} $ around $ w^k $ and projecting it onto the conic set $ \mathcal{C} $ using $ \Pi_{\mathcal{C}} $. Finally, the iterate $ w^{k+1} $ is updated by combining $ w^k $, $ u^{k+1} $, and $ \tilde{u}^{k+1} $, balancing objective minimization with constraint satisfaction.
Let $u^*$ be the optimal solution to problem~(\ref{eq:inclusion}), if it exists.
Then $u^k \to u^*$ as $k\to \infty$.
Additionally, the residual $\|w^{k+1} - w^k\|^2 \to 0$ at a rate of $ \mathcal{O}(1/k) $~\citep{he2015convergence,davis2016convergence}.

\section{Deep Unrolled DR Splitting}
In this work, we aim to design a neural network architecture that efficiently learns high-quality solutions for convex QPs. However, directly unrolling Algorithm~\ref{alg:dr} presents challenges due to the need to solve linear systems in Step~\ref{line:inverse}. To address this, we first introduce the DR-GD algorithm. Building on this foundation, we then present the unrolled neural network framework.

\begin{figure*}[htbp]
    \centering
    \includegraphics[width=\linewidth]{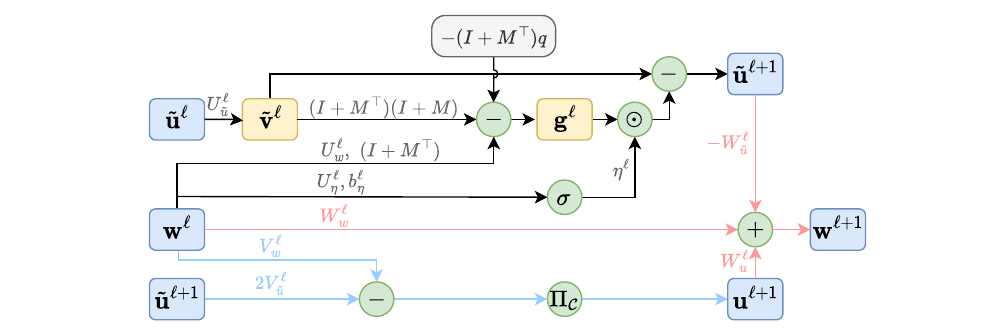}
    \caption{
    The figure shows one layer of the proposed DR-GD Net, which maps inputs $(\tilde{u}^\ell, w^\ell)$ to outputs $(\tilde{u}^{\ell+1}, u^{\ell+1}, w^{\ell+1})$.
    The update paths for $\tilde{u}, u$, and $w$ are colored black, blue and red, respectively.
    Both $\tilde{u}^\ell$ and $w^\ell$ are used to compute the new state $\tilde{u}^{\ell+1}$ with parameters $\theta_{\tilde{u}}^\ell \coloneqq \left(U_{\tilde{u}}^\ell, U_w^\ell, U_\eta^\ell, b_\eta^\ell\right)$.
    Then $\tilde{u}^{\ell+1}$ and $w^\ell$ are incorporated with ReLU activation function and parameters $\theta_{u}^\ell \coloneqq \left(V_{\tilde{u}}^\ell, V_w^\ell\right)$ to get the updated $u^{\ell+1}$.
    Finally, the updated $\tilde{u}^{\ell+1}, u^{\ell+1}$ with the previous state $w^\ell$ are updated with parameters $\theta_w \coloneqq \left(W_w^\ell, W_u^\ell, W_{\tilde{u}}^\ell\right)$ to produce $w^{\ell+1}$. 
    }
    \label{fig:layer}
\end{figure*}

\subsection{DR-GD Algorithm}
\label{sec:dr_gd_algo}
\begin{algorithm}[ht]
\caption{DR-GD Algorithm}\label{alg:replaced}
\begin{algorithmic}[1]
\STATE \textbf{Input:} $M, q, \mathcal{C}$
\STATE \textbf{Initialize:} $w^0\gets \mathbf{0}, \tilde{u}^0\gets \mathbf{0}$ 
\FOR{$k=0,1,\cdots$}
    \STATE $t^k \gets (I+M)^\top ((I+M)\tilde{u}^k - (w^k-q))$
    \STATE Compute $\eta^k$ by line search
    \STATE $\tilde{u}^{k+1} \gets \tilde{u}^k - \eta^k t^k$
    \STATE $u^{k+1} \gets \Pi_\mathcal{C}\left(2 \tilde{u}^{k+1} - w^k\right)$
    \STATE $w^{k+1} \gets w^k + \left(u^{k+1} - \tilde{u}^{k+1}\right)$
\ENDFOR

\STATE Return $u^k \coloneqq (x^k, y^k)$
\end{algorithmic}
\end{algorithm}

Solving linear systems is often achieved via \textit{direct methods} or \textit{indirect methods}~\citep{stellato2020osqp,o2021operator}. 
However, significant challenges arise when unrolling Algorithm~\ref{alg:dr} with linear systems being solved by either method.
\begin{itemize}
    \item \textit{Direct method:} The direct method involves matrix factorization, where dynamic pivoting is used, making the sequence of operations data-dependent and, therefore, not known in advance.
    This adaptive nature complicates the task of breaking the algorithm into a fixed, unrolled architecture.
    \item \textit{Indirect method:} The indirect method, in contrast, employs iterative algorithms, such as the conjugate gradient descent method, to solve the linear systems. 
    While this approach circumvents the need for matrix factorization, it introduces the complexity of inner-outer iterations, complicating the unrolling process.
\end{itemize}

Since $I+M$ is invertible, solving the linear system in Step~\ref{line:inverse} of Algorithm~\ref{alg:dr} is equivalent to addressing the following unconstrained \textit{least-squares problem}: 
\begin{equation}
\label{eq:least_square}
\min_{\tilde{u}} f(\tilde{u}) \coloneqq \frac{1}{2}\left\|(I+M)\tilde{u} - (w^k - q)\right\|^2.
\end{equation}
Moreover, provided that the $\ell^2$-norm of the error in solving the linear systems in Step~\ref{line:inverse} is finitely summable, Algorithm~\ref{alg:dr} guarantees convergence to optimal solutions~\citep{eckstein1992douglas,combettes2004solving}. 
If problem~(\ref{eq:least_square}) is solved to a pre-specified accuracy using iterative methods, unrolling Algorithm~\ref{alg:dr} results in a nested architecture due to the inner-outer iterations. 
The need to replicate the iterative process increases complexity and reduces efficiency, making the unrolled network more difficult to scale. 
In this work, we streamline the process by employing a single gradient step, \( \tilde{u}^{k+1} \gets \tilde{u}^k - \eta^k \nabla f(\tilde{u}^k) \), at each iteration, where \( \tilde{u}^k \) denotes the previous iterate and \( \eta^k \) is the step size. 
This gradient-based update enhances both efficiency and scalability when unrolled. 
The resulting algorithm is outlined in Algorithm~\ref{alg:replaced}.

\begin{proposition} 
\label{prop:convergence} 
The sequence $\{u^k\}$ generated by Algorithm~\ref{alg:replaced} converges to the solution of problem~(\ref{eq:inclusion}). 
Furthermore, the sequences $\{x^k\}$ and $\{y^k\}$ also converge to the solution of problem~(\ref{eq:qcp}). 
\end{proposition}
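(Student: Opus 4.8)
## Proof Proposal

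The plan is to recognize Algorithm~\ref{alg:replaced} as an inexact instance of the Douglas--Rachford splitting Algorithm~\ref{alg:dr} and then invoke the known convergence theory for inexact DR splitting. The key observation is that the update $\tilde{u}^{k+1} \gets \tilde{u}^k - \eta^k t^k$ with $t^k = (I+M)^\top\big((I+M)\tilde{u}^k - (w^k-q)\big) = \nabla f(\tilde{u}^k)$ is a single gradient-descent step on the least-squares problem~(\ref{eq:least_square}), whose unique minimizer is exactly $(I+M)^{-1}(w^k-q)$, the quantity computed in Step~\ref{line:inverse} of Algorithm~\ref{alg:dr}. Thus if we write $\hat{u}^{k+1} \coloneqq (I+M)^{-1}(w^k-q)$ for the \emph{exact} DR iterate and $e^k \coloneqq \tilde{u}^{k+1} - \hat{u}^{k+1}$ for the error, Algorithm~\ref{alg:replaced} is precisely DR splitting with errors $\{e^k\}$. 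By the results of~\citet{eckstein1992douglas,combettes2004solving}, if $\sum_k \|e^k\| < \infty$ then $\{u^k\}$ converges to a solution of~(\ref{eq:inclusion}), and the final claim about $\{x^k\},\{y^k\}$ follows immediately since a solution $u^*=(x^*,y^*)$ of~(\ref{eq:inclusion}) encodes a KKT point of~(\ref{eq:qcp}).

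The technical core, then, is to show $\sum_k \|e^k\| < \infty$. First I would establish a contraction-type estimate for the gradient step: since $f$ is a convex quadratic with Hessian $(I+M)^\top(I+M) \succeq \mu I$ where $\mu = \sigma_{\min}(I+M)^2 > 0$ (positive because $I+M$ is invertible), a gradient step with an appropriate step size $\eta^k$ — either a fixed $\eta \in (0, 2/L)$ with $L = \|(I+M)\|^2$, or the exact line-search value, which only does better — contracts the distance to the minimizer: $\|\tilde{u}^{k+1} - \hat{u}^{k+1}\| \le \rho \,\|\tilde{u}^k - \hat{u}^{k+1}\|$ for some $\rho = \rho(\eta,\mu,L) < 1$. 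The subtlety is that the \emph{target} $\hat{u}^{k+1}$ moves with $k$ (it depends on $w^k$), so $\|\tilde{u}^k - \hat{u}^{k+1}\|$ is not simply the previous error; I would split it as $\|\tilde{u}^k - \hat{u}^k\| + \|\hat{u}^k - \hat{u}^{k+1}\| = \|e^{k-1}\| + \|(I+M)^{-1}(w^{k-1} - w^k)\| \le \|e^{k-1}\| + \|(I+M)^{-1}\|\,\|w^k - w^{k-1}\|$. This yields a recursion of the form $\|e^k\| \le \rho\|e^{k-1}\| + \rho\,c\,\|w^k - w^{k-1}\|$.

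To close the argument I would use the DR residual decay: from~\citet{he2015convergence,davis2016convergence} (cited in the excerpt), $\|w^{k+1}-w^k\|^2 \to 0$ at rate $\mathcal{O}(1/k)$, hence $\|w^{k+1}-w^k\| = \mathcal{O}(1/\sqrt{k})$ — which is unfortunately \emph{not} summable. This is the main obstacle: a naive telescoping of the recursion gives $\sum_k \|e^k\| \lesssim \sum_k \|w^k - w^{k-1}\|$, which may diverge. To overcome it I see two routes. The cleaner one is to appeal to the summable-residual strengthening: under the monotone structure here (with $M+M^\top\succeq 0$ giving a co-coercivity/firm-nonexpansiveness of the DR operator), one in fact has $\sum_k \|w^{k+1}-w^k\|^2 < \infty$, and combined with the geometric factor $\rho<1$ in the recursion, a standard lemma (e.g., if $a_k \le \rho a_{k-1} + b_k$ with $\rho<1$ and $\sum b_k^2 < \infty$, then $\sum a_k^2 < \infty$, and then a Cauchy--Schwarz / Kronecker-type argument) upgrades square-summability of the errors to what is needed — or, more directly, one shows $\sum_k \|e^k\|^2 < \infty$ suffices for the inexact DR theory one is invoking. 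The alternative route is to take the step size $\eta^k$ (or equivalently run a geometrically growing but still $\mathcal{O}(1)$-per-iteration number of inner gradient steps) so that $\rho^{(k)} \to 0$ fast enough to force summability regardless; but this complicates the unrolling story, so I would prefer the first route. I would therefore state and prove a lemma: \emph{the DR-splitting error sequence satisfies $\sum_k \|e^k\|^2 < \infty$, which is sufficient for Theorem~7 of~\citet{eckstein1992douglas}}, and then conclude $u^k \to u^*$ and consequently $x^k \to x^*$, $y^k \to y^*$.
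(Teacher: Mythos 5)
Your overall architecture matches the paper's: you cast Algorithm~\ref{alg:replaced} as inexact Douglas--Rachford with error $e^k=\tilde u^{k+1}-(I+M)^{-1}(w^k-q)$, you use the strong convexity and smoothness of the least-squares objective (via the invertibility of $I+M$ and the line search) to obtain a per-step contraction factor $\rho<1$ toward the moving target, and you arrive at the same recursion $\|e^k\|\le\rho\|e^{k-1}\|+\rho\,c\,\|w^k-w^{k-1}\|$ that the paper derives. You also correctly identify the crux of the whole proof: the generic DR residual rate $\|w^{k+1}-w^k\|=\mathcal{O}(1/\sqrt{k})$ is not summable, so the recursion alone does not deliver $\sum_k\|e^k\|<\infty$.

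However, your preferred way of closing that gap does not work. The inexact-DR results you invoke (Theorem~7 of \citet{eckstein1992douglas}, Corollary~5.2 of \citet{combettes2004solving}) require absolute summability $\sum_k\|e^k\|<\infty$; square summability $\sum_k\|e^k\|^2<\infty$ is strictly weaker, is not a hypothesis under which those theorems are stated, and there is no Cauchy--Schwarz or Kronecker-type device that upgrades $\ell^2$ to $\ell^1$ (take $\|e^k\|=1/k$). So the lemma you propose to ``state and prove'' at the end is precisely the statement that is missing, not a routine consequence of what precedes it. The paper closes the gap by a different mechanism: it imposes the step-size condition $\eta^k<\lambda_{\min}(I+M)/\lambda_{\max}^2(I+M)$, under which $2\Phi_k-Id$ is a strict contraction and hence so is each $T_k=\tfrac12\left(Id+C_{N_{\mathcal{C}}}(2\Phi_k-Id)\right)$; it then argues that the tail of $\{\|w^{k+1}-w^k\|\}$ decays geometrically and is therefore absolutely summable, and feeds this into the same recursion as a convolution with the geometric sequence $\tau^j$, giving $\sum_k\|\epsilon^k\|\le\frac{\tau}{1-\tau}\sum_j\|w^j-w^{j-1}\|<\infty$, after which Corollary~5.2 of \citet{combettes2004solving} applies and finite-dimensionality turns weak convergence into strong convergence. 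Your alternative route (driving $\rho^{(k)}\to0$ with more inner steps) would also restore summability, but as you note it changes the algorithm being analyzed. To repair your argument along the paper's lines you need some additional structural input forcing $\sum_k\|w^{k+1}-w^k\|<\infty$; averagedness alone only gives square summability of the residuals, and that is where your proposal currently stops short.
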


In Proposition~\ref{prop:convergence}, we demonstrate that Algorithm~\ref{alg:replaced} converges to the optimal solution of problem~(\ref{eq:qcp}), with the proof provided in Appendix~\ref{appendix:convergence}. 
While Algorithm~\ref{alg:replaced} is expected to converge at a slower rate than Algorithm~\ref{alg:dr} due to the inexact evaluation involved in solving linear systems, it offers significant advantages in terms of unrolling and parameterization by eliminating the need for matrix factorizations or inner–outer iterations.

\subsection{Algorithm Unrolling}

\begin{algorithm}[ht]
\caption{DR-GD Net}\label{alg:network}
\begin{algorithmic}[1]
\STATE \textbf{Input:} $M, q, \mathcal{C}$, number of layers $L$ and embedding size $d^\ell$.
\STATE \textbf{Initialize:} $\tilde{\mathbf{u}}^0 \gets \mathbf{0}, \mathbf{u}^0 \gets \Pi_\mathcal{C}(-\mathbf{q}), \mathbf{w}^0 \gets q \cdot \mathbf{1}^{d^0} + \mathbf{u}^0$; 
\FOR{$ \ell=0,\cdots, L-1$}
    \STATE Update $\tilde{u}$: $\tilde{\mathbf{v}}^{\ell} \gets \tilde{\mathbf{u}}^{\ell}U_{\tilde{u}}^\ell$;
    \STATE \hspace{0.2cm} \small{$\mathbf{g}^{\ell} \gets (I+M)^\top\left((I+M)\tilde{\mathbf{v}}^{\ell} - \left(\mathbf{w}^{\ell}U_w^\ell-q\cdot\mathbf{1}^{d^\ell}\right)\right)$;}
    \STATE \hspace{0.2cm} \small{$\tilde{\mathbf{u}}^{\ell+1} \gets \tilde{\mathbf{v}}^{\ell} - \eta^\ell\sigma\left(\mathbf{w}^{\ell}U_\eta^\ell + b_\eta^\ell\right) \odot \mathbf{g}^{\ell}$;}
    
    \STATE Update $u$: $\mathbf{u}^{\ell+1} \gets \Pi_\mathcal{C} \left(2\tilde{\mathbf{u}}^{\ell+1} V_{\tilde{u}}^\ell - \mathbf{w}^{\ell}V_w^\ell \right)$;
    \STATE Update $w$: $\mathbf{w}^{\ell+1} \gets \mathbf{w}^{\ell}W_w^\ell + \left(\mathbf{u}^{\ell+1} W_u^\ell - \tilde{\mathbf{u}}^{\ell+1} W_{\tilde{u}}^\ell \right)$
\ENDFOR
\STATE \textbf{Return} $u \coloneqq \mathbf{u}^L P_u^{L}$
\end{algorithmic}
\end{algorithm}

In this section, we propose DR-GD Net which is designed by 
unrolling Algorithm~\ref{alg:replaced}.
The structure of the model is detailed in Algorithm~\ref{alg:network}, where parameters for channel expansion are introduced to enhance the model's flexibility and learning capacity.
$\mathbf{1}^d$ is a row vector with all ones in dimension $d$,
$\tilde{\mathbf{u}}^0, \mathbf{u}^0$ and $\mathbf{w}^0 \in \mathbb{R}^{(n+m)\times d^0}$.
Let $\Theta \coloneqq \left\{\left\{\theta_{\tilde{u}}^\ell, \theta_{u}^\ell, \theta_w^\ell\right\}_{\ell=0}^{L-1}, P_u^{L}\right\}$ be the parameters of the network, where $\theta_{\tilde{u}}^\ell \coloneqq \left(U_{\tilde{u}}^\ell, U_w^\ell, U_\eta^\ell, b_\eta^\ell\right)$,
$\theta_{u}^\ell \coloneqq \left(V_{\tilde{u}}^\ell, V_w^\ell\right)$
and $\theta_w \coloneqq \left(W_w^\ell, W_u^\ell, W_{\tilde{u}}^\ell\right)$.
$\eta^\ell\in \mathbb{R}$ is the prior knowledge about the step-size of the $\ell$-th layer.
$P_u^{L}$ is the final linear mapping for the outputs.
$\sigma(\cdot)$ is the sigmoid activation function.
Projection onto a product cone $\mathcal{C}$, $\Pi_\mathcal{C}$ is performed component-wise.
For dimensions corresponding to the full real space $\mathbb{R}$, the projection is an identity mapping, leaving the values unchanged.
For dimensions corresponding to the non-negative orthant $\mathbb{R}_+$, the projection is $\Pi_{\mathbb{R}_+}(a) = \max(0, a)$. 
This operation is precisely the definition of the ReLU activation function, making it a standard and computationally efficient building block for implementing this type of projection.
The initialization can be seen as starting with $\tilde{u}=0$, which is achieved by considering a pre-iteration state $\tilde{u}^{-1} = 0$ and $w^{-1} = q$, and running the subsequent algorithm steps for one iteration to generate the initial state.
One layer of DR-GD Net is depicted in Figure~\ref{fig:layer}.
Specifically, the unrolled DR-GD Net can emulate Algorithm~\ref{alg:replaced} by applying a specific instantiation of $\Theta$.
Consequently, Algorithm~\ref{alg:network} is capable of recovering the optimal solutions to problem~(\ref{eq:qcp}) when an adequate number of layers are employed.
However, by introducing learnable parameters, our goal is to allow the network to adapt to different problem instances, thereby enabling it to achieve approximate solutions with much fewer layers.

\subsection{Training}
\label{sec:training}
In this work, the neural network is trained in a supervised manner, with the loss function defined as the $\ell^2$ distance between the predicted primal-dual solutions $(x, y)$ and the optimal solutions $(x^*, y^*)$, obtained by solving QPs using the SCS solver.

For a dataset of QPs, $\mathcal{M}$, we train the proposed network by finding the optimal $\Theta$ by minimizing the following loss function:
\begin{equation*}
    \label{eq:loss}
    \underset{\Theta}{\min} \; \frac{1}{2|\mathcal{M}|} \sum_{i=1}^{|\mathcal{M}|} \left(\|x_i - x_i^*\|^2 + \|y_i-y_i^*\|^2\right),
\end{equation*}
where the subscript $i$ indicates the $i^{th}$ sample in $\mathcal{M}$.

In the recent works of~\citet{sambharya2023end,sambharya2024learning}, an unsupervised loss function based on the fixed-point residual was proposed, which eliminates the need for labeled data. 
However, as noted in~\citet{sambharya2024learning}, this approach tends to focus on localized optimization metrics within the iterative process rather than the overall objective, potentially limiting its effectiveness; that is, the loss values may appear sufficiently low even when the solutions produced are far from optimal. 
By leveraging supervised learning with a regression loss, the DR-GD Net can learn an effective warm-start point, enhancing the performance of downstream DR splitting-based algorithms. 
This approach not only aligns the network architecture with the underlying algorithm but also incorporates the global insights provided by the ground truth optimal solutions.

\section{Computational Studies}
To evaluate the proposed framework, we first analyze the convergence behavior of Algorithm~\ref{alg:replaced} on synthetic instances of varying sizes. 
Next, we compare its performance against state-of-the-art solvers and learning-based baselines on both synthetic and real-world datasets. 
Finally, we provide a detailed discussion of the results. 
The code is publicly available at~\url{https://github.com/NetSysOpt/DR-GD.git}.

\paragraph{Baseline Algorithms.}

In our experiments, we denote our algorithm as \texttt{DR-GD-NN} and compare it against state-of-the-art solvers and L2O algorithms. 

The solvers considered are:
\begin{itemize}
    \item[(i)] \texttt{SCS}~\citep{o2021operator}: A first-order solver for quadratic cone programming based on the DR splitting algorithm with homogeneous self-dual embedding.
    \item[(ii)] \texttt{OSQP}~\citep{stellato2020osqp}: A first-order convex QP solver based on the ADMM, which is equivalent to DR splitting under appropriate variable transformations.
    \item[(iii)] \texttt{rAPDHG}~\citep{lu2024mpax}: A restarted average PDHG method~\citep{lu2023pdqp} for solving convex QPs.
\end{itemize}

For \texttt{SCS}, the default linear solver is used with an absolute feasibility tolerance of $10^{-4}$, relative feasibility tolerance of $10^{-4}$, and infeasibility tolerance of $10^{-7}$. \texttt{SCS} also employs techniques such as adaptive step sizing, over-relaxation, and data normalization to enhance performance. Further details can be found in~\citet{o2021operator}.
\texttt{rAPDHG} is configured with absolute and relative tolerances of $10^{-4}$ and enabled $\ell_2$ norm rescaling, as it otherwise exhibited convergence difficulties.

Unlike \texttt{SCS}, \texttt{OSQP} enforces only primal and dual feasibility without explicitly verifying the termination criteria based on the primal-dual gap. 
To ensure a fair comparison, we follow the approach in~\citet{lu2023pdqp,o2021operator} to obtain \texttt{OSQP} solutions within the desired gap tolerance. 
Specifically, we initialize absolute and relative feasibility tolerances at $10^{-4}$ (as in \texttt{SCS}) and iteratively halve them if the gap tolerance is not met. 
Additionally, the convergence-checking interval is set to be $1$ for both \texttt{SCS} and \texttt{OSQP} in all experiments.

The learning-based baseline algorithms include:
\begin{itemize}
    \item[(i)] \texttt{L2WS(Fp)}~\citep{sambharya2024learning}: A feedforward neural network that takes vectorized instance parameters as input to generate warm starts, followed by $60$ fixed algorithmic iterations, as suggested in~\citet{sambharya2024learning}. The network is trained with the fixed-point residual as the unsupervised loss function.
    \item[(ii)] \texttt{L2WS(Reg)}~\citep{sambharya2024learning}: A method that shares the same architecture and number of iterations as \texttt{L2WS(Fp)} but employs a regression loss function instead.
    \item[(iii)] \texttt{GNN}~\citep{chen2024gnn}: A graph neural network trained using a regression loss on graphs representing the QP instances.
\end{itemize}

\paragraph{Datasets.}
The datasets used in this work include synthetic benchmarks and perturbed real-world instances. 
Specifically, the datasets are:
\begin{itemize}
    \item[(i)] \textbf{QP (RHS)}~\citep{dontidc3}: Convex QPs parameterized only by the right-hand side of equality constraints, generated as in~\citet{dontidc3}, with $n=200, 500, 1000$ in the experiments. 
    
    \item[(ii)] \textbf{QP}~\citep{gao2024ipm}: 
    The dataset was generated as in~\citet{gao2024ipm}, where all the parameters are perturbed by a random factor sampled from $U[0.9, 1.1]$.
    
    \item[(iii)] \textbf{QPLIB}~\citep{furini2019qplib}: 
    Selected instances from~\citet{furini2019qplib} with all parameters perturbed by a random factor sampled from $U[0.9, 1.1]$.
    
    \item[(iv)] \textbf{Portfolio}~\citep{stellato2020osqp}: Consider the portfolio optimization problem, as introduced in~\citet{stellato2020osqp}, which is formulated as follows:
\begin{equation*}
\begin{aligned}
   & \underset{x\in\mathbb{R}^n, y\in\mathbb{R}^k}{\min} && x^\top D x + y^\top y -\frac{1}{\gamma} \mu^\top x \\
   &\quad \; \text{s.t.} && y=F^\top x \\
    &&&               \mathbf{1}^\top x = 1\\
    &&&               x \geq 0
\end{aligned}
\end{equation*}
where the variable $x\in \mathbb{R}^n$ represents the portfolio, $y\in \mathbb{R}^k$ is the axillary variable.
The problem is parameterized by $\mu\in\mathbb{R}^n$ the vector of expected returns, $\gamma >0$ the risk aversion parameter, $F\in\mathbb{R}^{n\times k}$ the factor loading matrix and $D\in\mathbb{R}^{n\times n}$ a diagonal matrix describing the asset-specific risk.
The problems with $k=100, 200, 300, 400$ factors and $n=10k$ assets are considered in the experiments.
The instances are generated by sampling $F_{ij} \sim N(0,1)$ with $50\%$ nonzero elements, $D_{ii} \sim U[0, \sqrt{k}]$, $\mu_i\sim N(0,1)$ and $\gamma=1$.
\end{itemize}

All instances considered in this work are formulated as in~(\ref{eq:qp}).
\begin{equation}
\label{eq:qp}
\begin{aligned}
    \underset{x\in\mathbb{R}^n}{\min} \quad & \frac{1}{2} x^\top P x + c^\top x \\
    \text{s.t.} \quad & Ax = b \\
    & Gx \leq h \\
    & l \leq x \leq u
\end{aligned}
\end{equation}
where $P=P^\top \succeq 0, c\in\mathbb{R}^n, A\in \mathbb{R}^{m_1\times n}, b\in\mathbb{R}^{m_1}, G\in \mathbb{R}^{m'_2\times n}, h\in\mathbb{R}^{m'_2}$ are the model parameters. The bounds on the variables are given by $l, u\in \mathbb{R}^n$. 
The formulation in problem~(\ref{eq:qp}) can be transformed into problem~(\ref{eq:qcp}) by integrating the bound constraints into inequality constraints and merging the equality and inequality constraints.
The size of each dataset is listed in Table~\ref{tab:problem_size}.

\begin{table}[h]
\caption{Problem sizes}
\label{tab:problem_size}
\begin{center}
\resizebox{0.7\columnwidth}{!}{
\begin{tabular}{@{} >{\raggedright\arraybackslash}p{2cm} 
                  >{\raggedright\arraybackslash}p{1cm} 
                  >{\centering\arraybackslash}p{2.5cm} 
                  >{\centering\arraybackslash}p{2.5cm} 
                  >{\centering\arraybackslash}p{2.5cm} 
                  @{}}
\toprule
\multicolumn{2}{c}{\bf Instance} & \bf $n$ & \bf $m_1$ & \bf $m_2$ \\ 
\toprule
\textbf{QP (RHS)} & $N$  & $N$    & $N/2$ & $N/2$   \\
\midrule
\textbf{QP} & $N$  & $N$    & $N/2$ & $N/2$   \\
\midrule
\multirow{5}{*}{\textbf{QPLIB}} & 3913 & 300  & 61  & 600  \\
&8845& 1,546 & 490 & 1,848  \\
&4270 & 1,600 & 401 & 3,202 \\
&3547 & 1,998 & 89  & 2,959 \\
\midrule
\textbf{Portfolio} & $k$     & $11k$  & $k+1$ & $20k$   \\
\bottomrule
\end{tabular}
}
\end{center}
\end{table}

For each dataset, $400$ samples are generated for training, $40$ for validation, and $100$ for testing. 
All reported results are based on the test set. 

\paragraph{Evaluation Configurations.} 
All experiments were conducted on an NVIDIA GeForce RTX 3090 GPU and a 12th Gen Intel(R) Core(TM) i9-12900K CPU, using Python 3.9.17, PyTorch 2.0.1, SCS 3.2.6~\citep{o2021operator} and OSQP 0.6.7~\citep{stellato2020osqp}. 
To demonstrate the warm-start effect across various solver configurations and ensure experimental consistency with baseline methods, we explored different parameter settings for SCS on each dataset, as detailed in Appendix~\ref{appendix:solver_settings}.

In all experiments, DR-GD Nets with $4$ layers and embedding sizes of $128$ are trained with a batch size of $2$, a learning rate of $10^{-5}$, and the Adam optimizer~\citep{kingma2014adam}. The parameters $\eta^l$ are set to $0.05$ for QPLIB datasets and $0.1$ for the other datasets.
Early stopping is employed to terminate training if the validation loss shows no improvement for 10 consecutive epochs. 
The model achieving the best validation performance is saved for testing.

\subsection{Convergence of DR-GD}
\label{sec:dr-gd}
This section analyzes the convergence behavior of Algorithm~\ref{alg:replaced}, the foundation of our proposed DR-GD Net.
The convergence of Algorithm~\ref{alg:replaced} is expected to be slower than Algorithm~\ref{alg:dr}, due to the inexact evaluation at each iteration. 
In Table~\ref{tab:dr_replaced}, we compare the performance of Algorithm~\ref{alg:dr} and Algorithm~\ref{alg:replaced} on the dataset \textbf{QP} with different sizes, with a stopping criterion of $1 \times 10^{-6}$ for the fixed-point error $\|w^{k+1}-w^k\|_2$. 
The columns \enquote{Obj.}, \enquote{Max Eq.}, \enquote{Max Ineq.}, and \enquote{Iter.} report the objective value, maximum equality constraint violation, maximum inequality constraint violation, and number of iterations upon termination, respectively. 
The \enquote{Ratio} column shows the ratio of the number of iterations required by Algorithm~\ref{alg:replaced} to those required by Algorithm~\ref{alg:dr}.

The results indicate that, Algorithm~\ref{alg:replaced} could produce solutions with the desired feasibility and optimality, though it generally requires more iterations than the original DR splitting algorithm. 
The computational overhead remains manageable with a maximum factor of 1.50 and decreases as the problem size increases. 
Despite its slower convergence, Algorithm~\ref{alg:replaced} avoids exact computation of linear systems, making it well-suited for the development of an unrolled neural network based on this approach.

\begin{table}[ht]
	\centering
        \caption{Convergence comparison between Algorithm~\ref{alg:dr} and Algorithm~\ref{alg:replaced}}
	\label{tab:dr_replaced}
    \resizebox{0.95\linewidth}{!}{
        \begin{tabular}{@{}ll cllc cllc l}
	\toprule
        \multicolumn{2}{c}{\multirow{2}{*}{\bf Instance}} & \multicolumn{4}{c}{\bf Algorithm~\ref{alg:dr}} &\multicolumn{4}{c}{\bf Algorithm~\ref{alg:replaced}} &\multirow{2}{*}{\bf Ratio}  \\
        \cmidrule(l){3-6}  \cmidrule(l){7-10} 
		&& Obj. & Max Eq. & Max Ineq. & Iter. & Obj. & Max Eq. & Max Ineq. & Iter. & \\ 
        \midrule
        \multirow{3}{*}{\textbf{QP}}
        &200 & $-37.126$  & $4.0\times 10^{-10}$ & $4.6\times 10^{-10}$ & $5,049$  & $-37.126$  & $2.4\times 10^{-6}$ & $1.4\times 10^{-6}$  & $7,555$  & $1.50$ \\
        &500 & $-90.878$  & $1.2\times 10^{-10}$ & $8.7\times 10^{-11}$ & $13,653$ & $-90.878$  & $2.0\times 10^{-6}$ & $1.6 \times 10^{-6}$ & $16,523$ & $1.21$\\
        &1000& $-160.185$ &$3.8\times 10^{-11}$  & $3.2\times 10^{-11}$ & $30,844$ & $-160.185$ & $2.6\times 10^{-7}$ & $2.0\times 10^{-6}$  & $36,280$ & $1.18$\\
	\bottomrule
	\end{tabular}
    }
\end{table}

\subsection{Computational Results on DR-GD Net}

\paragraph{QP (RHS).}

We compare the warm-start performance of \texttt{L2WS(FP)}, \texttt{L2WS(Reg)}, \texttt{GNN}, and \texttt{DR-GD-NN} on the \textbf{QP (RHS)} test set across various problem sizes.
The instances are parameterized by the right-hand sides of the equality constraints, encapsulated in a single vector that serves as the input to the neural network in the \texttt{L2WS} models.
The results are summarized in Table~\ref{tab:rhs}.
The column labeled \enquote{Cold Start} shows the number of iterations and the solve time required by the SCS solver without warm starts. 
The \enquote{Iters.}/ \enquote{Time (s)} and \enquote{Ratio} columns present the iterations/total time (including model inference and solve time in seconds) and the average reduction ratio achieved by each method across all test problems. 
To align the experiment settings of baseline methods, solver settings were configured with advanced techniques disabled, as shown in Table~\ref{tab:solver_settings}. 
The best results in each category are highlighted in bold.

Overall, \texttt{L2WS(Reg)} and \texttt{DR-GD-NN} demonstrate comparable performance. 
Specifically, \texttt{L2WS(Reg)} achieves the best results on instances with $200$ variables, where the simple right-hand-side perturbation facilitates learning the mapping from parameters to warm-start points.
However, as the problem scale increases, \texttt{DR-GD-NN} surpasses \texttt{L2WS(Reg)}, achieving a 45.4\% and 53.6\% reduction in iterations for datasets with 500 and 1000 variables, respectively. 
This highlights the efficiency of \texttt{DR-GD-NN} in handling larger instances, where the increased input size adds complexity to the solution mapping for \texttt{L2WS}. 
The \texttt{L2WS(FP)} variant is generally outperformed by \texttt{L2WS(Reg)}, except in the smallest case with 100 variables. 
This result demonstrates the superiority of using a regression loss over the unsupervised fixed-point residual loss, which primarily focuses on intermediate metrics within the iterative process. 
This could potentially lead \texttt{L2WS(FP)} to converge to suboptimal regions, particularly in larger and more complex problems.
In contrast, the regression loss leverages global information from ground-truth solutions, enabling better performance across a broader range of problem sizes and complexities. 
Additionally, while \texttt{GNN} performs competitively on instances with 200 variables, it falls short on larger datasets compared to \texttt{DR-GD-NN}.

In terms of solution time, the proposed method consistently outperforms the baseline approaches, achieving reductions of up to 53.5\%. 
Notably, even under some rare cases where the reduction in iterations is not the largest, \texttt{DR-GD-NN} still demonstrates superior efficiency.
This advantage is attributed to the fast inference time of the proposed framework, which eliminates the need for the additional iterative steps required by the \texttt{L2WS} methods.
For smaller instances, the reduction in the solving time is less pronounced compared to larger problems, probably because the shorter solving time for small instances leaves less room for improvement. 

\begin{table}[h!]
  \centering
  \caption{Comparison of results on \textbf{QP (RHS)} dataset between \texttt{L2WS(Fp)}, \texttt{L2WS(Reg)}, \texttt{GNN} and \texttt{DR-GD-NN}.}
        \label{tab:rhs}

  \resizebox{0.95\linewidth}{!}{
    \begin{tabular}{@{}cl ccc cc cc cc}
	\toprule
        \multicolumn{2}{c}{\multirow{2}{*}{\bf Instance}} & 
        \multicolumn{1}{c}{\bf Cold Start} & \multicolumn{2}{c}{\texttt{L2WS(Fp)}} & \multicolumn{2}{c}{\texttt{L2WS(Reg)}} &\multicolumn{2}{c}{\texttt{GNN}} & \multicolumn{2}{c}{\texttt{DR-GD-NN}}   \\
        \cmidrule(l){4-5}  \cmidrule(l){6-7} \cmidrule(l){8-9} \cmidrule(l){10-11}
		&&\multicolumn{1}{c}{\bf Iters.} & Iters. $\downarrow$  & Ratio $\uparrow$  & Iters. $\downarrow$ & Ratio $\uparrow$ & Iters. $\downarrow$ & Ratio $\uparrow$ & Iters. $\downarrow$ & Ratio $\uparrow$\\ 
        \midrule
        \multirow{3}{*}{\textbf{QP (RHS)}}
        &200  & 3,855  & 3,616   & 6.2\%  & \textbf{2,018} & \textbf{ 47.6\%} & 2,203 & 42.8\%  & 2,208 & 42.6\% \\
        &500  & 10,827  & 10,820  & 0.1\%  & 6,117          & 43.5\%         & 6,350 & 41.3\%  & \bf 5,907      & \bf 45.4\% \\
        &1000 & 24,268 & 24,254 & 0.1\%  & 21,566        & 11.1\%            &22,416  & 7.6\%    & \bf 11,266   & \bf 53.6\%\\
	\bottomrule

        \toprule
        \multicolumn{2}{c}{\multirow{2}{*}{\bf Instance}} & 
        \multicolumn{1}{c}{\bf Cold Start} & \multicolumn{2}{c}{\texttt{L2WS(Fp)}} & \multicolumn{2}{c}{\texttt{L2WS(Reg)}} &\multicolumn{2}{c}{\texttt{GNN}} & \multicolumn{2}{c}{\texttt{DR-GD-NN}}   \\
        \cmidrule(l){4-5}  \cmidrule(l){6-7} \cmidrule(l){8-9} \cmidrule(l){10-11}
		&&\multicolumn{1}{c}{\bf Time (s)} & Time (s) $\downarrow$  & Ratio $\uparrow$  & Time (s) $\downarrow$ & Ratio $\uparrow$ & Time (s) $\downarrow$ & Ratio $\uparrow$ & Time (s) $\downarrow$ & Ratio $\uparrow$\\ 
        \midrule
        \multirow{3}{*}{\textbf{QP (RHS)}}
        &200  & 0.470  & 0.490    & -4.5\%  & 0.293 & 37.7\% & 0.311 & 34.6\%  & \textbf{0.289} & \textbf{38.1\%} \\
        &500  & 6.800  & 7.348  & -8.2\% & 4.436          & 34.8\%           & 4.019 & 40.0\%  & \bf 3.735      & \bf 44.8\% \\
        &1000 & 85.894 & 88.332 & -2.9\%  & 78.684        & 8.1\%            &79.900  & 7.3\%     & \bf 40.160    & \bf 53.5\%\\
	\bottomrule
\end{tabular}}
\end{table}

\paragraph{QP.}
In this section, we compare the performance of \texttt{GNN} and \texttt{DR-GD-NN}, both of which are trained in a supervised manner, using the loss function defined in Section~\ref{sec:training}.
Since both methods are agnostic to the parameterization of the problems, we evaluate their performance on the dataset \textbf{QP} across different problem sizes.
The columns labeled \enquote{Cold Start Iters.}/\enquote{Cold Start Time (s)} and \enquote{Iters.}/\enquote{Time (s)} denote the average number of iterations taken by the SCS solver with cold start and warm start points provided by the two methods, respectively, using the same configuration as in the previous section. 
The \enquote{Ratio} column represents the average ratio of reduction in the number of iterations and total time, respectively.
\texttt{GNN} and \texttt{DR-GD-NN} achieve similar performance on smaller datasets with $200$ variables. 
However, \texttt{DR-GD-NN} performs significantly better than \texttt{GNN} on larger datasets with $500$ and $1000$ variables. 
This trend is consistent with the results in Table~\ref{tab:rhs}, where both methods show comparable performance on smaller problems but \texttt{DR-GD-NN} demonstrates greater scalability and efficiency in learning effective warm-start points for the SCS solver on larger-scale problems.

\begin{table}[H]
  \centering
  \caption{Comparison of results on \textbf{QP} dataset between \texttt{GNN} and \texttt{DR-GD-NN}.}
    \label{tab:default_settings_qp_all}

  \resizebox{0.6\columnwidth}{!}{
	\begin{tabular}{@{}cl ccc cc}
	\toprule
        \multicolumn{2}{c}{\multirow{2}{*}{\bf Instance}} & 
        \multicolumn{1}{c}{\bf Cold Start} &\multicolumn{2}{c}{\texttt{GNN}} & \multicolumn{2}{c}{\texttt{DR-GD-NN}}   \\
        \cmidrule(l){4-5}  \cmidrule(l){6-7}
	   &	&\multicolumn{1}{c}{\bf Iters.} & Iters. $\downarrow$  & Ratio $\uparrow$  & Iters. $\downarrow$ & Ratio $\uparrow$ \\ 
        \midrule
        \multirow{3}{*}{\textbf{QP}}
        &200  &4,568   &3,200   &29.9\%  &3,168   &\bf 30.6\%  \\
        &500  &10,507  &10,432  &0.5\%  &8,768  &\bf 16.4\%  \\
        &1000 &12,660 &12,672 &0.3\%   &10,862 &\bf 14.1\% \\
	\bottomrule
        \toprule
        \multicolumn{2}{c}{\multirow{2}{*}{\bf Instance}} & 
        \multicolumn{1}{c}{\bf Cold Start} &\multicolumn{2}{c}{\texttt{GNN}} & \multicolumn{2}{c}{\texttt{DR-GD-NN}}   \\
        \cmidrule(l){4-5}  \cmidrule(l){6-7}
	   &	&\multicolumn{1}{c}{\bf Time (s)} & Time (s) $\downarrow$  & Ratio $\uparrow$  & Time (s) $\downarrow$ & Ratio $\uparrow$ \\ 
        \midrule
        \multirow{3}{*}{\textbf{QP}}
        &200  &0.554   &0.427   &23.4\%  &0.406   &\bf 26.7\%  \\
        &500  &6.591  &6.701  &-0.19\%  &5.517   &\bf 16.1\%  \\
        &1000 &21.543 &21.889 &-1.1\%   &18.639 &\bf 13.0\% \\
	\bottomrule
\end{tabular}}
\end{table}

\paragraph{QPLIB and Portfolio.}
In this section, we evaluate the performance of \texttt{DR-GD-NN} on larger datasets, including instances from QPLIB and synthetic portfolio optimization problems. 
For these experiments, we use the default settings of SCS, as outlined in Table~\ref{tab:solver_settings}, to highlight the practical warm-start effect of our approach.

To evaluate performance, we analyze the warm-start effect in terms of both solving time and the number of iterations. 
The columns labeled \enquote{OSQP}, \enquote{rAPDHG} and \enquote{SCS} present the solve time and iteration count for the two solvers under cold-start conditions. 
The column \enquote{SCS (Warm Start)} includes the number of iterations required by SCS after utilizing the warm-start points, the inference time of \texttt{DR-GD-NN}, and the solve time, along with the total time, labeled as \enquote{Iters.}, \enquote{Inf. Time (s)}, \enquote{Solve Time (s)}, and \enquote{Time (s)} respectively. 
The ratio of improvement on the total time and number of iterations are included in the column \enquote{Ratio}.
The observed reduction in iterations ranges from $33.7\%$ to $57.8\%$, demonstrating that the warm-start points generated by \texttt{DR-GD-NN} enable robust performance, even on more challenging instances.

While the model inference time increases with problem size, it remains negligible compared to the solve time, especially for larger-scale problems. The reduction in solve time ranges from $8.5\%$ to $55.7\%$. 
For the \textbf{Portfolio} dataset with $100$ factors and the \textbf{QPLIB} instance 3913, the improvement in solve time is minimal compared to other cases. This is attributed to the relatively small size of these problems and their inherently short solve times. Consequently, even with a significant reduction in the number of iterations, the overall improvement in solve time is limited.

\begin{table*}[ht]
	\centering
	\caption{Performance of DR-GD Net on \textbf{QPLIB} and \textbf{Portfolio} datsets.}
	\label{tab:default_settings}
    \resizebox{1\linewidth}{!}{
	\begin{tabular}{@{}cl c cc cc cccc cc}
	\toprule
        \multicolumn{2}{c}{\multirow{2}{*}{\bf Instance}} & \multicolumn{1}{c}{\bf OSQP} & \multicolumn{1}{c}{\bf rAPDHG} & \multicolumn{2}{c}{\bf SCS} &\multicolumn{4}{c}{\bf SCS (Warm Start)} &\multicolumn{2}{c}{\bf Ratio}  \\
        \cmidrule(l){3-3} \cmidrule(l){4-4} \cmidrule(l){5-6} \cmidrule(l){7-10} \cmidrule(l){11-12}
	&	& Time (s) & Time (s) & Iters. & Time (s) & Iters. & Inf. Time (s) & Solve Time (s) & Time (s) & Iters. & Time \\ 
        \midrule
        \multirow{4}{*}{\textbf{QPLIB}}
        &3913 &0.043      & 1.988 & 262    & 0.037 & 98     & 0.003 & 0.031 & \bf 0.034 & 62.5\% & 8.9\%  \\
        &4270 &\bf 0.744  & 7.779 & 5,677  & 1.693 & 2,705  & 0.007 & 0.816 & 0.823     & 57.8\% & 55.7\% \\
        &8845 &2.328      & 21.976& 10,519 & 2.771 & 5,393  & 0.004 & 1.445 & \bf 1.449 & 40.3\% & 39.0\% \\
        &3547 &5.886      & 5.579 & 29,690 & 6.283 & 17,286 & 0.011 & 3.655 & \bf 3.666 & 37.5\% & 37.3\% \\
        \midrule
        \multirow{4}{*}{\textbf{Portfolio}}
        &100 &  0.155  & 2.165 & 654   & 0.150 & 428 & 0.004 & 0.132 & \bf 0.136 & 33.7\% & 8.5\%  \\
        &200 &  0.411  & 2.372 & 997   & 0.626 & 545 & 0.005 & 0.363 & \bf 0.368 & 45.0\% & 40.8\% \\
        &300 &  1.056  & 2.763 & 1,272 & 1.656 & 734 & 0.007 & 1.007 & \bf 1.014 & 42.2\% & 38.6\% \\
        &400 &  2.183  & 3.481 & 1,533 & 4.280 & 712 & 0.010 & 2.140 & \bf 2.150 & 53.4\% & 49.6\% \\
	\bottomrule
	\end{tabular}
    }
\end{table*}

\subsection{Analysis of Loss and Iteration Improvement}

Figure~(\ref{fig:loss_ratio}) provides a detailed analysis of the relationship between validation loss and iteration improvement ratio over training epochs for the dataset \textbf{Portfolio} with $100$ factors. 
The figure demonstrates that the validation loss steadily decreases as training progresses, indicating that the \texttt{DR-GD-NN} model is effectively learning to generate high-quality warm-start points. 
Simultaneously, the iteration improvement ratio increases as the loss decreases, suggesting a strong correlation between the proximity of the predicted solutions to the ground-truth solutions and the model’s warm-starting efficiency. 
This trend highlights how the predicted warm-start points, as they approach the optimal solutions, lead to a decrease in the number of SCS solver iterations required, thereby enhancing the solver's overall efficiency.

\begin{figure*}[ht]
    \centering
    \begin{subfigure}[b]{0.45\textwidth}
        \centering
        \includegraphics[width=\textwidth]{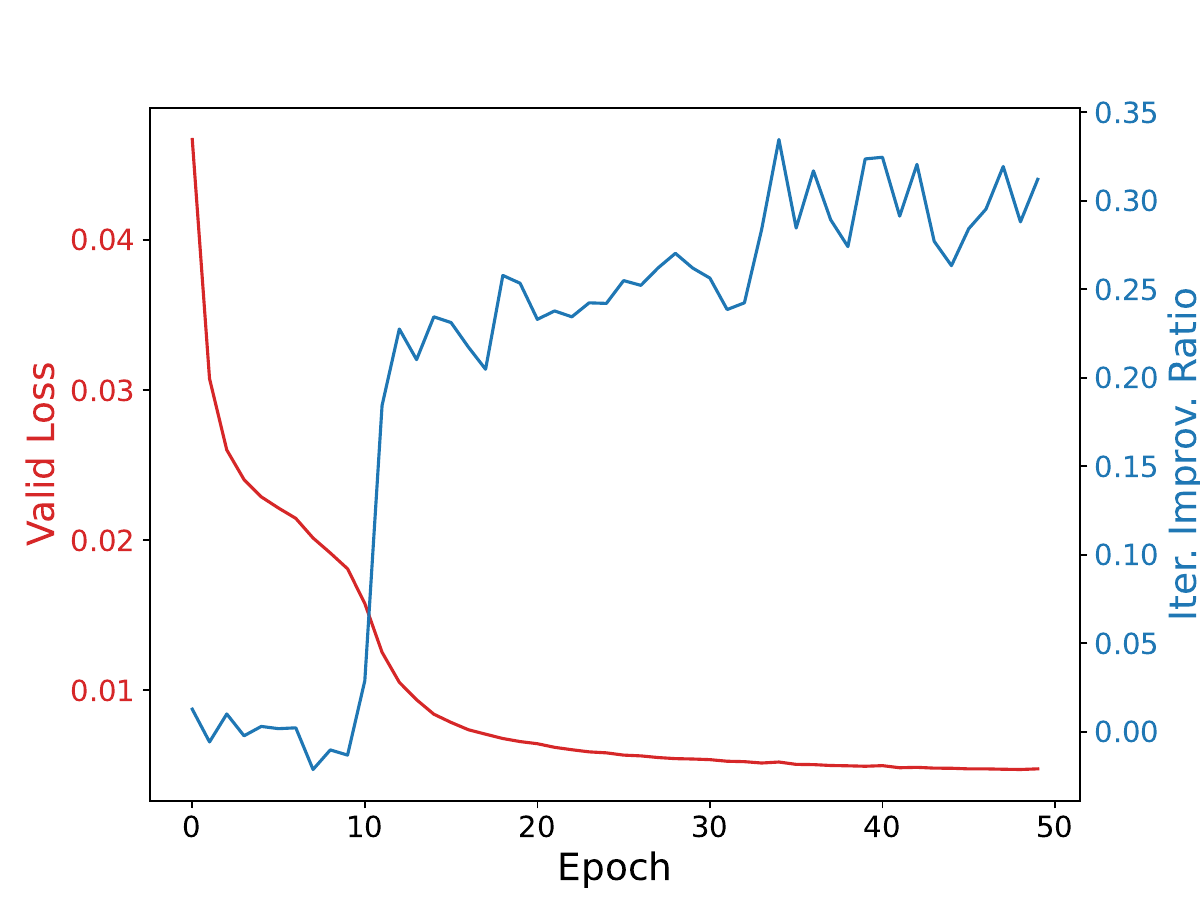}
        \caption{}
        \label{fig:loss_ratio}
    \end{subfigure}
    \hfill
    \begin{subfigure}[b]{0.4\textwidth}
        \centering
        \includegraphics[width=\textwidth]{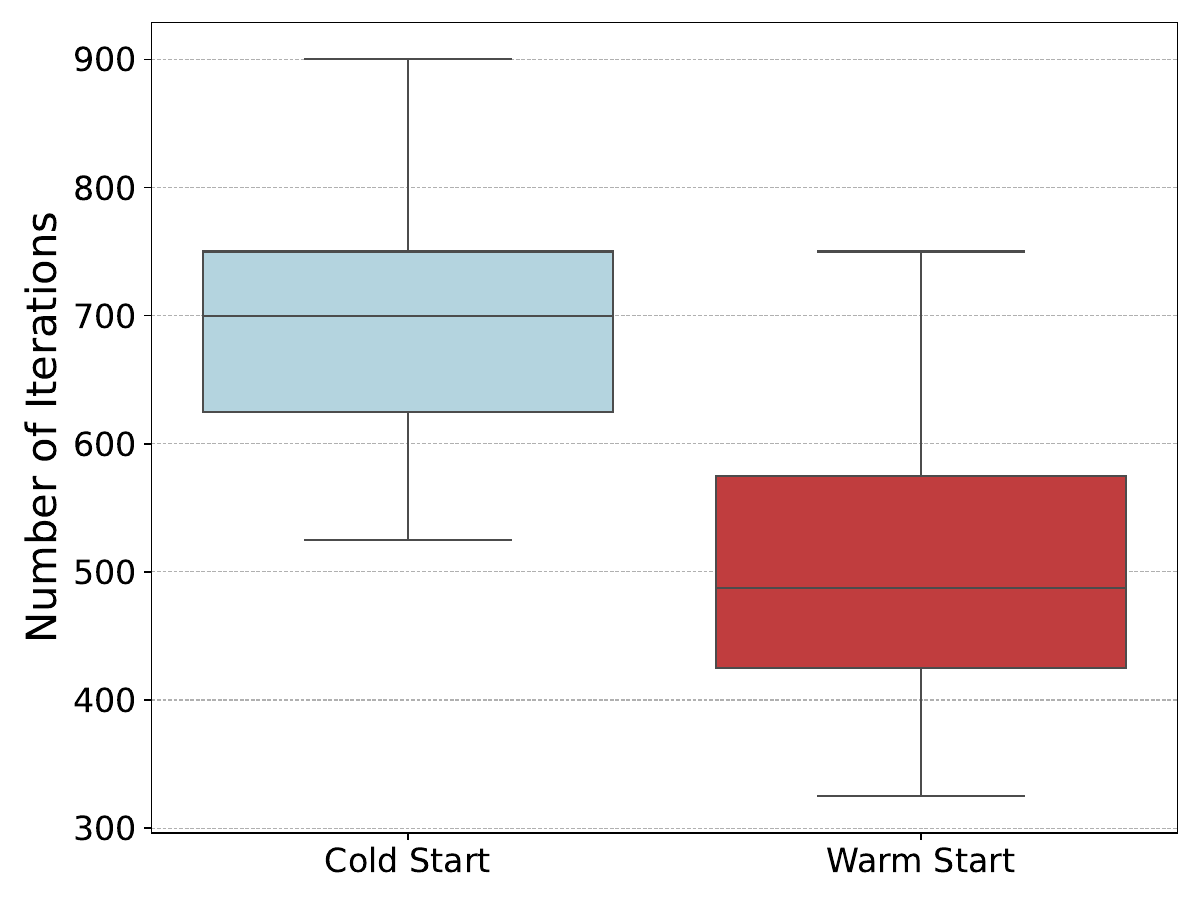}
        \caption{}
        \label{fig:cs_ws}
    \end{subfigure}
    \caption{(a) Validation loss and iteration improvement ratio on the dataset \textbf{Portfolio} with $100$ factors during training. (b) Box plot of the number of iterations required for cold starts and warm starts on the testing samples of the dataset \textbf{Portfolio} with $100$ factors.}
\end{figure*}

Additionally, Figure~(\ref{fig:cs_ws}) presents a box plot comparing the distribution of solver iterations required for cold starts and warm starts across all test samples. 
The clear separation between the distributions highlights the consistent and significant warm-start effect throughout the testing dataset. 
Specifically, the median number of iterations required after warm-starting is significantly lower than that for cold-starting, with a noticeably narrower interquartile range.
This reduced variability suggests that \texttt{DR-GD-NN} not only enhances average solver performance but also contributes to more stable and predictable solver behavior.

\section{Conclusions}
In this work, we introduce the DR-GD Net, an unrolled neural network model based on a modified DR splitting algorithm, which eliminated the need for exact linear system solving at each iteration. 
The proposed architecture can effectively learn QP solutions and serves as a high-quality warm-start mechanism for SCS, a state-of-the-art first-order solver for convex QPs.
Empirical results demonstrate that the proposed framework consistently reduces both the number of iterations and solve times across a range of problem instances. 
Future work will proceed in two main directions. First, we will focus on integrating acceleration techniques, such as the classical Anderson acceleration method, into the unrolled network to enhance its efficiency and robustness. Second, we aim to develop a more scalable training framework, potentially using unsupervised or semi-supervised learning, to handle problems with varying sizes, structures, and numerical conditions.

\subsubsection*{Acknowledgments}
This work was supported by the National Key R\&D Program of China (Grant No. 2022YFA1003900). 
Jinxin Xiong, Linxin Yang and Akang Wang also acknowledge support from National Natural Science Foundation of China (Grant No. 12301416), Guangdong Basic and Applied Basic Research Foundation (Grant No. 2024A1515010306), Shenzhen Science and Technology Program
(Grant No. RCBS20221008093309021), and Hetao Shenzhen-Hong Kong Science and Technology Innovation Cooperation Zone Project (No. HZQSWS-KCCYB-2024016).




\bibliography{main}

\begin{thebibliography}{43}
\providecommand{\natexlab}[1]{#1}
\providecommand{\url}[1]{\texttt{#1}}
\expandafter\ifx\csname urlstyle\endcsname\relax
  \providecommand{\doi}[1]{doi: #1}\else
  \providecommand{\doi}{doi: \begingroup \urlstyle{rm}\Url}\fi

\bibitem[Aljadaany et~al.(2019)Aljadaany, Pal, and Savvides]{aljadaany2019douglas}
Raied Aljadaany, Dipan~K Pal, and Marios Savvides.
\newblock Douglas-rachford networks: Learning both the image prior and data fidelity terms for blind image deconvolution.
\newblock In \emph{Proceedings of the IEEE/CVF Conference on Computer Vision and Pattern Recognition}, pp.\  10235--10244, 2019.

\bibitem[Baker(2019)]{baker2019learning}
Kyri Baker.
\newblock Learning warm-start points for ac optimal power flow.
\newblock In \emph{2019 IEEE 29th International Workshop on Machine Learning for Signal Processing (MLSP)}, pp.\  1--6. IEEE, 2019.

\bibitem[Boyd \& Vandenberghe(2004)Boyd and Vandenberghe]{boyd2004convex}
Stephen Boyd and Lieven Vandenberghe.
\newblock \emph{Convex optimization}.
\newblock Cambridge university press, 2004.

\bibitem[Boyd et~al.(2017)Boyd, Busseti, Diamond, Kahn, Koh, Nystrup, Speth, et~al.]{boyd2017multi}
Stephen Boyd, Enzo Busseti, Steve Diamond, Ronald~N Kahn, Kwangmoo Koh, Peter Nystrup, Jan Speth, et~al.
\newblock Multi-period trading via convex optimization.
\newblock \emph{Foundations and Trends{\textregistered} in Optimization}, 3\penalty0 (1):\penalty0 1--76, 2017.

\bibitem[Chen et~al.(2022)Chen, Chen, Chen, Heaton, Liu, Wang, and Yin]{chen2022learning}
Tianlong Chen, Xiaohan Chen, Wuyang Chen, Howard Heaton, Jialin Liu, Zhangyang Wang, and Wotao Yin.
\newblock Learning to optimize: A primer and a benchmark.
\newblock \emph{Journal of Machine Learning Research}, 23\penalty0 (189):\penalty0 1--59, 2022.

\bibitem[Chen et~al.(2024)Chen, Chen, Liu, Wang, and Yin]{chen2024gnn}
Ziang Chen, Xiaohan Chen, Jialin Liu, Xinshang Wang, and Wotao Yin.
\newblock Expressive power of graph neural networks for (mixed-integer) quadratic programs.
\newblock \emph{arXiv preprint arXiv:2406.05938}, 2024.

\bibitem[Combettes(2004)]{combettes2004solving}
Patrick~L Combettes.
\newblock Solving monotone inclusions via compositions of nonexpansive averaged operators.
\newblock \emph{Optimization}, 53\penalty0 (5-6):\penalty0 475--504, 2004.

\bibitem[Cortes(1995)]{cortes1995support}
Corinna Cortes.
\newblock Support-vector networks.
\newblock \emph{Machine Learning}, 1995.

\bibitem[Davis \& Yin(2016)Davis and Yin]{davis2016convergence}
Damek Davis and Wotao Yin.
\newblock Convergence rate analysis of several splitting schemes.
\newblock \emph{Splitting methods in communication, imaging, science, and engineering}, pp.\  115--163, 2016.

\bibitem[Diehl(2019)]{diehl2019warm}
Frederik Diehl.
\newblock Warm-starting ac optimal power flow with graph neural networks.
\newblock In \emph{33rd Conference on Neural Information Processing Systems (NeurIPS 2019)}, pp.\  1--6, 2019.

\bibitem[Donti et~al.(2021)Donti, Rolnick, and Kolter]{dontidc3}
Priya~L Donti, David Rolnick, and J~Zico Kolter.
\newblock Dc3: A learning method for optimization with hard constraints.
\newblock In \emph{International Conference on Learning Representations}, 2021.

\bibitem[Douglas \& Rachford(1956)Douglas and Rachford]{douglas1956numerical}
Jim Douglas and Henry~H Rachford.
\newblock On the numerical solution of heat conduction problems in two and three space variables.
\newblock \emph{Transactions of the American mathematical Society}, 82\penalty0 (2):\penalty0 421--439, 1956.

\bibitem[Eckstein \& Bertsekas(1992)Eckstein and Bertsekas]{eckstein1992douglas}
Jonathan Eckstein and Dimitri~P Bertsekas.
\newblock On the douglas—rachford splitting method and the proximal point algorithm for maximal monotone operators.
\newblock \emph{Mathematical Programming}, 55:\penalty0 293--318, 1992.

\bibitem[Frank \& Rebennack(2016)Frank and Rebennack]{frank2016introduction}
Stephen Frank and Steffen Rebennack.
\newblock An introduction to optimal power flow: Theory, formulation, and examples.
\newblock \emph{IIE transactions}, 48\penalty0 (12):\penalty0 1172--1197, 2016.

\bibitem[Furini et~al.(2019)Furini, Traversi, Belotti, Frangioni, Gleixner, Gould, Liberti, Lodi, Misener, Mittelmann, et~al.]{furini2019qplib}
Fabio Furini, Emiliano Traversi, Pietro Belotti, Antonio Frangioni, Ambros Gleixner, Nick Gould, Leo Liberti, Andrea Lodi, Ruth Misener, Hans Mittelmann, et~al.
\newblock Qplib: a library of quadratic programming instances.
\newblock \emph{Mathematical Programming Computation}, 11:\penalty0 237--265, 2019.

\bibitem[Gao et~al.(2024)Gao, Xiong, Wang, Duan, Xue, and Shi]{gao2024ipm}
Xi~Gao, Jinxin Xiong, Akang Wang, Qihong Duan, Jiang Xue, and Qingjiang Shi.
\newblock Ipm-lstm: A learning-based interior point method for solving nonlinear programs.
\newblock In \emph{The Thirty-eighth Annual Conference on Neural Information Processing Systems}, 2024.

\bibitem[Garcia et~al.(1989)Garcia, Prett, and Morari]{garcia1989model}
Carlos~E Garcia, David~M Prett, and Manfred Morari.
\newblock Model predictive control: Theory and practice—a survey.
\newblock \emph{Automatica}, 25\penalty0 (3):\penalty0 335--348, 1989.

\bibitem[Gasse et~al.(2022)Gasse, Bowly, Cappart, Charfreitag, Charlin, Ch{\'e}telat, Chmiela, Dumouchelle, Gleixner, Kazachkov, et~al.]{gasse2022machine}
Maxime Gasse, Simon Bowly, Quentin Cappart, Jonas Charfreitag, Laurent Charlin, Didier Ch{\'e}telat, Antonia Chmiela, Justin Dumouchelle, Ambros Gleixner, Aleksandr~M Kazachkov, et~al.
\newblock The machine learning for combinatorial optimization competition (ml4co): Results and insights.
\newblock In \emph{NeurIPS 2021 competitions and demonstrations track}, pp.\  220--231. PMLR, 2022.

\bibitem[Gregor \& LeCun(2010)Gregor and LeCun]{gregor2010ista}
Karol Gregor and Yann LeCun.
\newblock Learning fast approximations of sparse coding.
\newblock In \emph{Proceedings of the 27th international conference on international conference on machine learning}, pp.\  399--406, 2010.

\bibitem[He \& Yuan(2015)He and Yuan]{he2015convergence}
Bingsheng He and Xiaoming Yuan.
\newblock On the convergence rate of douglas--rachford operator splitting method.
\newblock \emph{Mathematical Programming}, 153\penalty0 (2):\penalty0 715--722, 2015.

\bibitem[Ichnowski et~al.(2021)Ichnowski, Jain, Stellato, Banjac, Luo, Borrelli, Gonzalez, Stoica, and Goldberg]{ichnowski2021accelerating}
Jeffrey Ichnowski, Paras Jain, Bartolomeo Stellato, Goran Banjac, Michael Luo, Francesco Borrelli, Joseph~E Gonzalez, Ion Stoica, and Ken Goldberg.
\newblock Accelerating quadratic optimization with reinforcement learning.
\newblock \emph{Advances in Neural Information Processing Systems}, 34:\penalty0 21043--21055, 2021.

\bibitem[Jung et~al.(2022)Jung, Park, and Park]{jung2022learning}
Haewon Jung, Junyoung Park, and Jinkyoo Park.
\newblock Learning context-aware adaptive solvers to accelerate quadratic programming.
\newblock \emph{arXiv preprint arXiv:2211.12443}, 2022.

\bibitem[Kingma(2014)]{kingma2014adam}
Diederik~P Kingma.
\newblock Adam: A method for stochastic optimization.
\newblock \emph{arXiv preprint arXiv:1412.6980}, 2014.

\bibitem[Li et~al.(2024)Li, Yang, Chen, Wang, Chen, Mao, Ma, Wang, Ding, Tang, et~al.]{lipdhg}
Bingheng Li, Linxin Yang, Yupeng Chen, Senmiao Wang, Qian Chen, Haitao Mao, Yao Ma, Akang Wang, Tian Ding, Jiliang Tang, et~al.
\newblock Pdhg-unrolled learning-to-optimize method for large-scale linear programming.
\newblock In \emph{Forty-first International Conference on Machine Learning}, 2024.

\bibitem[Liu et~al.(2020)Liu, Chen, and Ji]{liu2020learnable}
Jiulong Liu, Nanguang Chen, and Hui Ji.
\newblock Learnable douglas-rachford iteration and its applications in dot imaging.
\newblock \emph{Inverse Problems \& Imaging}, 14\penalty0 (4), 2020.

\bibitem[Lu \& Yang(2025)Lu and Yang]{lu2023pdqp}
Haihao Lu and Jinwen Yang.
\newblock A practical and optimal first-order method for large-scale convex quadratic programming: H. lu, j. yang.
\newblock \emph{Mathematical Programming}, pp.\  1--38, 2025.

\bibitem[Lu et~al.(2024)Lu, Peng, and Yang]{lu2024mpax}
Haihao Lu, Zedong Peng, and Jinwen Yang.
\newblock Mpax: Mathematical programming in jax.
\newblock \emph{arXiv preprint arXiv:2412.09734}, 2024.

\bibitem[Markowitz(1952)]{markowitz1952port}
Harry Markowitz.
\newblock Portfolio selection, 1952.

\bibitem[Monga et~al.(2021)Monga, Li, and Eldar]{monga2021algorithm}
Vishal Monga, Yuelong Li, and Yonina~C Eldar.
\newblock Algorithm unrolling: Interpretable, efficient deep learning for signal and image processing.
\newblock \emph{IEEE Signal Processing Magazine}, 38\penalty0 (2):\penalty0 18--44, 2021.

\bibitem[Nesterov \& Nemirovskii(1994)Nesterov and Nemirovskii]{nesterov1994interior}
Yurii Nesterov and Arkadii Nemirovskii.
\newblock \emph{Interior-point polynomial algorithms in convex programming}.
\newblock SIAM, 1994.

\bibitem[Nocedal \& Wright(1999)Nocedal and Wright]{nocedal1999numerical}
Jorge Nocedal and Stephen~J Wright.
\newblock \emph{Numerical optimization}.
\newblock Springer, 1999.

\bibitem[O'Donoghue(2021)]{o2021operator}
Brendan O'Donoghue.
\newblock Operator splitting for a homogeneous embedding of the linear complementarity problem.
\newblock \emph{SIAM Journal on Optimization}, 31\penalty0 (3):\penalty0 1999--2023, 2021.

\bibitem[Sambharya et~al.(2023)Sambharya, Hall, Amos, and Stellato]{sambharya2023end}
Rajiv Sambharya, Georgina Hall, Brandon Amos, and Bartolomeo Stellato.
\newblock End-to-end learning to warm-start for real-time quadratic optimization.
\newblock In \emph{Learning for Dynamics and Control Conference}, pp.\  220--234. PMLR, 2023.

\bibitem[Sambharya et~al.(2024)Sambharya, Hall, Amos, and Stellato]{sambharya2024learning}
Rajiv Sambharya, Georgina Hall, Brandon Amos, and Bartolomeo Stellato.
\newblock Learning to warm-start fixed-point optimization algorithms.
\newblock \emph{Journal of Machine Learning Research}, 25\penalty0 (166):\penalty0 1--46, 2024.

\bibitem[Stellato et~al.(2020)Stellato, Banjac, Goulart, Bemporad, and Boyd]{stellato2020osqp}
Bartolomeo Stellato, Goran Banjac, Paul Goulart, Alberto Bemporad, and Stephen Boyd.
\newblock Osqp: An operator splitting solver for quadratic programs.
\newblock \emph{Mathematical Programming Computation}, 12\penalty0 (4):\penalty0 637--672, 2020.

\bibitem[Su et~al.(2024)Su, Lian, Zhang, and Shi]{su2024transformer}
Yueming Su, Qiusheng Lian, Dan Zhang, and Baoshun Shi.
\newblock Transformer based douglas-rachford unrolling network for compressed sensing.
\newblock \emph{Signal Processing: Image Communication}, 127:\penalty0 117153, 2024.

\bibitem[Sun et~al.(2016)Sun, Li, Xu, et~al.]{sun2016deep}
Jian Sun, Huibin Li, Zongben Xu, et~al.
\newblock Deep admm-net for compressive sensing mri.
\newblock \emph{Advances in neural information processing systems}, 29, 2016.

\bibitem[Sun et~al.(2023)Sun, Zhang, Zheng, Guo, Sun, and Xue]{sun2023douglas}
Rongchao Sun, Yiqing Zhang, Hanying Zheng, Jianhua Guo, Jianyong Sun, and Jiang Xue.
\newblock A douglas-rachford splitting approach based deep network for mimo signal detection.
\newblock In \emph{2023 IEEE Wireless Communications and Networking Conference (WCNC)}, pp.\  1--6. IEEE, 2023.

\bibitem[Tibshirani(1996)]{tibshirani1996regression}
Robert Tibshirani.
\newblock Regression shrinkage and selection via the lasso.
\newblock \emph{Journal of the Royal Statistical Society Series B: Statistical Methodology}, 58\penalty0 (1):\penalty0 267--288, 1996.

\bibitem[Venkataraman \& Amos()Venkataraman and Amos]{venkataraman2021neural}
Shobha Venkataraman and Brandon Amos.
\newblock Neural fixed-point acceleration for convex optimization.
\newblock In \emph{8th ICML Workshop on Automated Machine Learning (AutoML)}.

\bibitem[Wolfe(1959)]{wolfe1959simplex}
Philip Wolfe.
\newblock The simplex method for quadratic programming.
\newblock \emph{Econometrica: Journal of the Econometric Society}, pp.\  382--398, 1959.

\bibitem[Yang et~al.(2024)Yang, Li, Ding, Wu, Wang, Wang, Tang, Sun, and Luo]{yang2024efficient}
Linxin Yang, Bingheng Li, Tian Ding, Jianghua Wu, Akang Wang, Yuyi Wang, Jiliang Tang, Ruoyu Sun, and Xiaodong Luo.
\newblock An efficient unsupervised framework for convex quadratic programs via deep unrolling.
\newblock \emph{arXiv preprint arXiv:2412.01051}, 2024.

\bibitem[Zhang \& Zhang(2022)Zhang and Zhang]{zhang2022learning}
Ling Zhang and Baosen Zhang.
\newblock Learning to solve the ac optimal power flow via a lagrangian approach.
\newblock In \emph{2022 North American Power Symposium (NAPS)}, pp.\  1--6, 2022.
\newblock \doi{10.1109/NAPS56150.2022.10012237}.

\end{thebibliography}
\bibliographystyle{tmlr}

\clearpage
\appendix
\section{Proof of Proposition~\ref{prop:convergence}}
\label{appendix:convergence}

Our goal is to prove that the sequence generated by Algorithm~\ref{alg:replaced} converges to the solution of problem~(\ref{eq:inclusion}).
The proof will contain two parts: 
\begin{itemize}
    \item First, we will prove that Algorithm~\ref{alg:replaced} is convergent.
    \item Second, we will prove that Algorithm~\ref{alg:replaced} converges to the solution of problem~(\ref{eq:inclusion}).
\end{itemize}

\begin{lemma}
\label{lem:proj}
Let $C$ be a convex subset of a Hilbert space H. For any $x\in H$, a point $p\in C$ is the projection of $x$ onto $C$ (i.e., $p=\Pi_\mathcal{C}(x)$) if and only if $p = (Id+N_C)^{-1}(x)$, where $Id$ is the identity operator on $H$ and $N_C$ is the normal cone to $C$.
\end{lemma}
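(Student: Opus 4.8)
The plan is to reduce the claimed resolvent identity to the standard variational characterization of the metric projection onto a convex set. First I would unwind the definition of the resolvent: the statement $p = (Id + N_C)^{-1}(x)$ means exactly that $x \in (Id + N_C)(p) = p + N_C(p)$, i.e.\ that $p \in C$ and $x - p \in N_C(p)$ (the membership $p \in C$ is automatic, since $N_C(p) = \emptyset$ when $p \notin C$). Recalling the definition $N_C(p) = \{v \in H : \langle v, z - p\rangle \le 0 \ \forall z \in C\}$, the condition $x - p \in N_C(p)$ is literally the inequality $\langle x - p, z - p\rangle \le 0$ for all $z \in C$. So the lemma is equivalent to the claim: $p = \Pi_C(x)$ if and only if $p \in C$ and $\langle x - p, z - p\rangle \le 0$ for all $z \in C$.

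To prove this remaining equivalence, for the ``only if'' direction I would use that $p = \Pi_C(x)$ is the minimizer of $z \mapsto \tfrac12 \|x - z\|^2$ over $z \in C$; for arbitrary $z \in C$ and $t \in (0,1]$ the point $p + t(z - p)$ lies in $C$ by convexity, and writing out $\|x - p - t(z-p)\|^2 \ge \|x-p\|^2$, cancelling, dividing by $t$, and letting $t \downarrow 0$ yields $\langle x - p, z - p\rangle \le 0$. For the ``if'' direction, the algebraic identity $\|x - z\|^2 = \|x - p\|^2 - 2\langle x - p, z - p\rangle + \|z - p\|^2$ together with the assumed inequality gives $\|x - z\|^2 \ge \|x - p\|^2$ for every $z \in C$, so $p$ is the (unique) minimizer, hence $p = \Pi_C(x)$; this also shows that the resolvent $(Id + N_C)^{-1}$ is single-valued on its domain.

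The only point requiring care is that $\Pi_C(x)$ is well-defined, which needs $C$ to be a nonempty \emph{closed} convex set so that the minimization attains its infimum and the minimizer is unique; I would note that this standard hypothesis is implicitly in force here (the lemma is applied to the closed cone $\mathcal{C}$) and invoke the classical Hilbert-space projection theorem. Beyond citing that theorem, every step is an elementary inner-product manipulation, so I do not anticipate any substantive obstacle.
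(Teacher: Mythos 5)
Your proof is correct and follows essentially the same route as the paper's: both reduce the resolvent identity to the variational characterization $\langle x-p, z-p\rangle \le 0$ for all $z \in C$, which is precisely the membership $x-p \in N_C(p)$. You additionally prove that characterization from first principles (the paper simply cites it as the definition of the projection and invokes maximal monotonicity of $N_C$ for single-valuedness, whereas you get uniqueness directly from the strict minimization), and you correctly flag that $C$ must be nonempty and closed for $\Pi_C$ to be well-defined --- a hypothesis the lemma omits but which holds for the cone $\mathcal{C}$ where it is applied.
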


\begin{proof}
By definition of projection onto convex set $C$, $p = \Pi_C(x)$ if and only if $\langle x-p, y-p\rangle \leq 0, \forall y\in C$.
Recall the definition of normal cone: $\forall p\in C, N_C(p) = \{z\vert (y-p)^\top z \leq 0, \forall y\in C\}$.
Therefore, 
$$\langle x-p, y-p\rangle \leq 0, \forall y\in C \Leftrightarrow (x-p) \in N_C(p).$$
Since $C$ is a convex set, the normal cone $N_C$ is a maximal monotone operator, which ensures that its resolvent $(Id+N_C)^{-1}$ is a well-defined, single-valued function.

We can now construct the chain of equivalences:
$$
\begin{aligned}
p = \Pi_C(x) &\Leftrightarrow \langle x-p, y-p\rangle \leq 0, \forall y\in C \\
&\Leftrightarrow (x-p) \in N_C(p) \\
&\Leftrightarrow p\in (Id + N_C)^{-1}(x) \\
&\Leftrightarrow p= (Id + N_C)^{-1}(x)
\end{aligned}
$$
\end{proof}

\begin{lemma}
\label{lem:T_k}
A single iteration in Algorithm 2, which maps $w^k$ to $w^{k+1}$, can be expressed by the operator $T_k$ as follows:
$$
w^{k+1} = T_k(w^k) = w^k + \left[\frac{1}{2}(Id + C_{N_\mathcal{C}}(2\Phi_k -Id))w^k - w^k\right],
$$
where 
$$
\Phi_k(w) = \left(I - \eta^k (I+M)^\top (I+M)\right)\tilde{u}^k + \eta^k (I+M)^\top (w-q)
$$ and $C_{N_\mathcal{C}}$ is the Cayley operator associated with the normal cone $N_{\mathcal{C}}$.
\end{lemma}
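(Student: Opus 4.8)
The plan is purely to unfold one iteration of Algorithm~\ref{alg:replaced} and recognize the projection step as a resolvent, then rewrite everything through the associated Cayley operator. First I would substitute the definition of $t^k$ into the gradient update and read off that the intermediate iterate is exactly the affine map $\Phi_k$ evaluated at $w^k$:
$$
\tilde{u}^{k+1} = \tilde{u}^k - \eta^k (I+M)^\top\big((I+M)\tilde{u}^k - (w^k - q)\big) = \big(I - \eta^k (I+M)^\top (I+M)\big)\tilde{u}^k + \eta^k (I+M)^\top (w^k - q) = \Phi_k(w^k).
$$
In particular, $2\tilde{u}^{k+1} - w^k = (2\Phi_k - Id)(w^k)$, so the reflected point fed to the projection is $(2\Phi_k - Id)(w^k)$.

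Next I would invoke Lemma~\ref{lem:proj}: since $\mathcal{C} = \mathbb{R}^n \times \mathcal{K}^*$ is closed and convex, $\Pi_{\mathcal{C}} = (Id + N_{\mathcal{C}})^{-1}$, the resolvent of the maximal monotone operator $N_{\mathcal{C}}$. Recalling the definition of the Cayley (reflection) operator, $C_{N_{\mathcal{C}}} = 2(Id + N_{\mathcal{C}})^{-1} - Id$, this gives $\Pi_{\mathcal{C}} = \tfrac{1}{2}\big(Id + C_{N_{\mathcal{C}}}\big)$. Applying this identity to the reflected point yields
$$
u^{k+1} = \Pi_{\mathcal{C}}\big((2\Phi_k - Id)(w^k)\big) = \tfrac{1}{2}\Big( (2\Phi_k - Id)(w^k) + C_{N_{\mathcal{C}}}\big((2\Phi_k - Id)(w^k)\big)\Big).
$$

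Finally I would substitute into the $w$-update $w^{k+1} = w^k + (u^{k+1} - \tilde{u}^{k+1}) = w^k + u^{k+1} - \Phi_k(w^k)$ and collect terms: the $\Phi_k(w^k)$ contributions cancel, leaving
$$
w^{k+1} = \tfrac{1}{2} w^k + \tfrac{1}{2} C_{N_{\mathcal{C}}}\big((2\Phi_k - Id)(w^k)\big) = \tfrac{1}{2}\big(Id + C_{N_{\mathcal{C}}}(2\Phi_k - Id)\big)(w^k),
$$
which, rewritten in the relaxed form $w^k + \big[\tfrac{1}{2}(Id + C_{N_{\mathcal{C}}}(2\Phi_k - Id))w^k - w^k\big]$, is exactly the claimed expression for $T_k$. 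This argument is essentially bookkeeping; the only points needing care are the correct identification supplied by Lemma~\ref{lem:proj}, the sign/normalization convention $C_{N_{\mathcal{C}}} = 2(Id+N_{\mathcal{C}})^{-1} - Id$, and keeping the operator composition $C_{N_{\mathcal{C}}} \circ (2\Phi_k - Id)$ distinct from a pointwise product. No genuine obstacle arises at this step; the real work is postponed to analyzing $T_k$ in the proof of Proposition~\ref{prop:convergence} (using that $\Phi_k$ is affine and that, for a suitable line-search choice of $\eta^k$, the map $w \mapsto C_{N_{\mathcal{C}}}(2\Phi_k - Id)(w)$ is nonexpansive).
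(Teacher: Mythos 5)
Your proposal is correct and follows essentially the same route as the paper's proof: identify $\tilde{u}^{k+1} = \Phi_k(w^k)$ by direct substitution, use Lemma~\ref{lem:proj} together with the definition $C_{N_{\mathcal{C}}} = 2(Id+N_{\mathcal{C}})^{-1} - Id$ to write $\Pi_{\mathcal{C}} = \tfrac{1}{2}\left(Id + C_{N_{\mathcal{C}}}\right)$, and substitute into the $w$-update so that the $\Phi_k(w^k)$ contributions cancel. Nothing further is needed.
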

\begin{proof}
The proof proceeds by direct substitution.
The Cayley operator $C_{N_\mathcal{C}}$ is defined as $C_{N_\mathcal{C}} = 2(Id + N_{\mathcal{C}})^{-1} - Id$.
Therefore, from Lemma 1, we get $\Pi_\mathcal{C} = \frac{1}{2}(C_{N_\mathcal{C}} + Id)$.

Using it we can reformulate the three key steps in Algorithm 2:
$$
\begin{aligned}
\text{Step 6: }& \tilde{u}^{k+1} \gets \tilde{u}^k - \eta^k(I+M)^\top ((I+M)\tilde{u}^k - (w^k-q))\\
\text{Step 7: }& u^{k+1} \gets \Pi_\mathcal{C}\left(2 \tilde{u}^{k+1} - w^k\right)\\
\text{Step 8: }& w^{k+1} \gets w^k + \left(u^{k+1} - \tilde{u}^{k+1}\right)
\end{aligned}
$$
By using the definition of $\Phi_k(w)$, the step 6 can be rewrite as $\tilde{u}^{k+1} = \Phi_k(w^k)$.  

By rewriting the projection operator using the Cayley operator, the step 7 can be reformulated as 
$$
u^{k+1} = \frac{1}{2}(C_{N_\mathcal{C}} + Id)(2\tilde{u}^{k+1}-w^k) = \frac{1}{2}\left[(C_{N_\mathcal{C}} + Id)(2\Phi_k - Id)\right]w^k.
$$

Then, finally, step 8 can be reformulated as
$$
\begin{aligned}
w^{k+1} &= w^k + u^{k+1} - \tilde{u}^{k+1} \\
&= w^k + \frac{1}{2}\left[(C_{N_\mathcal{C}} + Id)(2\Phi_k - Id)\right]w^k - \Phi_k(w^k) \\
&= w^k + \left[\frac{1}{2}C_{N_\mathcal{C}}(2\Phi_k - Id)w^k + \Phi_k(w^k) - \frac{1}{2}w^k\right] - \Phi_k(w^k) \\
&= w^k + \left[\frac{1}{2}C_{N_\mathcal{C}}(2\Phi_k - Id)w^k  - \frac{1}{2}w^k\right] \\
&= w^k + \left[\frac{1}{2}(Id + C_{N_\mathcal{C}}(2\Phi_k -Id))w^k - w^k\right] := T_k(w^k)
\end{aligned}
$$
\end{proof}

\begin{lemma}
\label{lem:conv_repl}
Rewrite Algorithm~\ref{alg:replaced} as $w^{k+1} = T_k(w^k)$, where 
$
T_k = \frac{1}{2}\left(Id + C_{N_\mathcal{C}}(2\Phi_k-Id)\right).
$
Assume $ \cap_{k\in \mathbb{N}} \operatorname{Fix} T_k \not = \phi$, then 
$
\sum_{k\in \mathbb{N}} \|T_k(w^k) - w^k\|^2 < +\infty
$
, which implies that 
$
\|w^{k+1} -w^k\| \to 0
$
\end{lemma}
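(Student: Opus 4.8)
The plan is to recognize each map $w^k\mapsto w^{k+1}$, that is, the operator $T_k$ of Lemma~\ref{lem:T_k}, as a firmly nonexpansive (equivalently $\tfrac12$-averaged) operator, and then run the classical Fej\'er-type telescoping argument for iterations of firmly nonexpansive operators that share a common fixed point. The hypothesis $\bigcap_k\operatorname{Fix}T_k\neq\emptyset$ is exactly what makes that argument available.

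\emph{Step 1: $T_k$ is firmly nonexpansive.} By Lemma~\ref{lem:proj}, $\Pi_\mathcal{C}=(Id+N_\mathcal{C})^{-1}$ is the resolvent of the maximal monotone operator $N_\mathcal{C}$, hence firmly nonexpansive, so its Cayley operator $C_{N_\mathcal{C}}=2\Pi_\mathcal{C}-Id$ is nonexpansive. It then remains to check that $2\Phi_k-Id$ is nonexpansive in $w$. Since $\Phi_k$ is affine in $w$ with linear part $\eta^k(I+M)^\top$ (and $\tilde u^k$ does not depend on $w$), it suffices to show $\Phi_k$ is firmly nonexpansive, i.e.
\[
\eta^k\, v^\top(I+M)^\top v \;\ge\; (\eta^k)^2\,\|(I+M)^\top v\|^2 \qquad\text{for all }v .
\]
The left-hand side equals $\tfrac{\eta^k}{2}\,v^\top\big((I+M)+(I+M)^\top\big)v\ge \eta^k\|v\|^2$, because $P\succeq 0$ forces $M+M^\top\succeq 0$ and hence $(I+M)+(I+M)^\top\succeq 2I$; the right-hand side is at most $(\eta^k)^2\|I+M\|^2\|v\|^2$. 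So the inequality holds as soon as $\eta^k\le\|I+M\|^{-2}$, and more sharply whenever $2\eta^k(I+M)(I+M)^\top\preceq(I+M)+(I+M)^\top$, which is precisely the stable step-size range that the line search in Algorithm~\ref{alg:replaced} keeps us in. Consequently $C_{N_\mathcal{C}}(2\Phi_k-Id)$ is nonexpansive as a composition of nonexpansive maps, and $T_k=\tfrac12\big(Id+C_{N_\mathcal{C}}(2\Phi_k-Id)\big)$ is firmly nonexpansive.

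\emph{Step 2: telescoping.} Fix $z\in\bigcap_{k\in\mathbb N}\operatorname{Fix}T_k$. Applying firm nonexpansiveness of $T_k$ to the pair $(w^k,z)$ and using $T_k z=z$ gives
\[
\|w^{k+1}-z\|^2+\|w^{k+1}-w^k\|^2\le\|w^k-z\|^2 .
\]
Summing this over $k=0,\dots,N$ telescopes to $\sum_{k=0}^{N}\|w^{k+1}-w^k\|^2\le\|w^0-z\|^2-\|w^{N+1}-z\|^2\le\|w^0-z\|^2$, and letting $N\to\infty$ yields $\sum_{k\in\mathbb N}\|w^{k+1}-w^k\|^2<+\infty$; in particular $\|w^{k+1}-w^k\|\to 0$, since the terms of a convergent series vanish.

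\emph{Main obstacle and caveats.} The heart of the proof is Step~1: showing that replacing the exact resolvent $(I+M)^{-1}$ by a single least-squares gradient step still leaves the base operator firmly nonexpansive. Both ingredients are essential there — monotonicity $M+M^\top\succeq 0$ supplies $(I+M)+(I+M)^\top\succeq 2I$, and the step-size control bounds $\eta^k(I+M)(I+M)^\top$; without a bound on $\eta^k$, $2\Phi_k-Id$ may fail to be nonexpansive and the telescoping inequality collapses. A minor technical point worth flagging is that $\Phi_k$, hence $T_k$ and $\operatorname{Fix}T_k$, depends on the auxiliary iterate $\tilde u^k$, so the reduction ``$w^{k+1}=T_k(w^k)$'' is to be understood along a fixed trajectory, and the hypothesis $\bigcap_k\operatorname{Fix}T_k\neq\emptyset$ refers to that realized family of operators.
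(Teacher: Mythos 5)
Your proof is correct and follows essentially the same route as the paper: you show $T_k$ is $\tfrac12$-averaged (firmly nonexpansive) by verifying that the reflected resolvent $C_{N_\mathcal{C}}$ and the reflected single-gradient-step map $2\Phi_k-Id$ are each nonexpansive under the same kind of step-size restriction $\eta^k\lesssim \|I+M\|^{-2}$ that the paper imposes, and then obtain summability of $\|w^{k+1}-w^k\|^2$ from a common fixed point. The only difference is cosmetic: you carry out the Fej\'er telescoping explicitly, whereas the paper delegates that step to Theorem 3.1 of \citet{combettes2004solving}; your closing caveat about $T_k$ (and hence $\operatorname{Fix}T_k$) depending on the realized iterate $\tilde u^k$ is a fair observation that the paper's hypothesis likewise absorbs into the assumption $\bigcap_k\operatorname{Fix}T_k\neq\emptyset$.
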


\begin{proof}
According to Lemma~\ref{lem:T_k}, the Algorithm~\ref{alg:replaced} can be rewritten as 
\begin{equation}
\label{eq:lemma1}
\begin{aligned}
w^{k+1} = T_k(w^k) = & w^k + \left[\frac{1}{2}(Id + C_{N_\mathcal{C}}(2\Phi_k -Id))w^k - w^k\right],
\end{aligned}
\end{equation}
where 
$$
\Phi_k(w) = \left(I - \eta^k (I+M)^\top (I+M)\right)\tilde{u}^k + \eta^k (I+M)^\top (w-q)
$$ and $C_{N_\mathcal{C}}$ is the Cayley operator.

Since
$
\|\Phi_k(w_1) - \Phi_k(w_2)\| \leq \eta^k \|I+M\|\|w_1-w_2\|
$,
$\Phi_k$ is lipschitz continuous with constant $L^k=\eta^k\lambda_{\max}(I+M)$.
Also, as
$$
    \left<\Phi_k(w_1) - \Phi_k(w_2), w_1-w_2\right> = \left<\eta^k (I+M)^\top (w_1-w_2), w_1-w_2\right> \geq \eta^k\lambda_{\min}(I+M) \|w_1-w_2\|^2,
$$
$\Phi_k$ is strongly monotone with constant $m^k=\eta^k\lambda_{\min}(I+M)$.
\begin{equation*}
\begin{aligned}
    &\|2\Phi_k(w_1) - w_1 - (2\Phi_k(w_2)-w_2)\|^2 \\
    & = \|2(\Phi_k(w_1)-\Phi_k(w_2)) - (w_1-w_2)\|^2 \\
    & = 4\|\Phi_k(w_1)-\Phi_k(w_2)\|^2 - 4\left<\Phi_k(w_1) - \Phi_k(w_2), w_1-w_2\right> + \|w_1-w_2\|^2 \\
    &\leq (4(L^k)^2-4m^k+1) \|w_1-w_2\|^2
\end{aligned}
\end{equation*}
If $4(L^k)^2 - 4m^k + 1 < 1$, that is $m^k > (L^k)^2$, then $2\Phi_k - Id$ is nonexpansive and contractive.

Assume that by using line-search such that $
\eta^k < \frac{\lambda_{\min}(I+M)}{\lambda^2_{\max}(I+M)}
$, then $2\Phi_k-Id$ is nonexpansive and thus 
$$
T_k = \frac{1}{2}(Id + C_A(2\Phi_k -Id))\in \mathcal{A}(\frac{1}{2})
$$ is an averaged operator.
As~(\ref{eq:lemma1}) taking the form of Algorithm 1.2 in~\citet{combettes2004solving} with $\lambda_k = 1, e_k = 0$ and $\alpha_k = \frac{1}{2}$, according to Theorem 3.1 and Remark 3.4 in~\citet{combettes2004solving} we can conclude that
$
\sum_{k\in \mathbb{N}} \|T_k(w^k) - w^k\|^2 < +\infty, 
$
and thus
$
\|w^{k+1} -w^k\| \to 0
$
\end{proof}

\begin{corollary}[Corollary 5.2~\citet{combettes2004solving}]
Let $\gamma\in (0, \infty)$, let $\{\nu_k\}$ be a sequence in $(0,2)$, and let $\{a_k\}$ and $\{b_k\}$ be a squence in $\mathcal{H}$.
Suppose that $0\in A + B$ is feasible, $\sum_{k\in\mathbb{N}}\nu_k(2-\nu_k) = +\infty$, and $\sum_{k\in\mathbb{N}} \nu_k\left(\|a_k\|+\|b_k\|\right)<+\infty$.
Take $x_0\in \mathcal{H}$ and set $\forall k\in \mathbb{N}$

\label{cor:5-2}
\begin{equation*}
\begin{aligned}
x_{k+1} &= x_k + \nu_k R_{\gamma A}\left(2\left(R_{\gamma B} x_k + b_k\right) - x_k \right) + a_k - \left(R_{\gamma B} x_k + b_k\right)),
\end{aligned}
\end{equation*}
\label{eq:cor5-2}
where $R_{\gamma A} = \left(Id + \gamma A\right)^{-1}, R_{\gamma B} = \left(Id + \gamma B\right)^{-1}$ are the resolvants for $A$ and $B$.
Then $\{x_k\}$ converges weakly to some point $x\in \mathcal{H}$ and $R_{\gamma B} x \in \left(A+B\right)^{-1}(0)$.
\end{corollary}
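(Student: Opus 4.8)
The plan is to recognize the displayed recursion as an inexact, relaxed Douglas–Rachford iteration and to invoke the Krasnosel'ski\u\i–Mann convergence theory for averaged operators with summable errors — this is precisely the quasi-Fej\'er machinery of \citet{combettes2004solving}, so in the paper one may simply cite it; below I sketch the self-contained argument. First I would fix the operator-theoretic setup. Since $A$ and $B$ are maximal monotone and $\gamma>0$, the resolvents $R_{\gamma A}=(Id+\gamma A)^{-1}$ and $R_{\gamma B}=(Id+\gamma B)^{-1}$ are single-valued and firmly nonexpansive, so the reflected resolvents (Cayley operators) $C_{\gamma A}=2R_{\gamma A}-Id$ and $C_{\gamma B}=2R_{\gamma B}-Id$ are nonexpansive and the Douglas–Rachford operator $T:=\tfrac12\big(Id+C_{\gamma A}C_{\gamma B}\big)$ is $\tfrac12$-averaged. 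I need two facts: (i) $\operatorname{Fix}T\neq\emptyset$ whenever $0\in A+B$ is feasible, and more precisely $x\in\operatorname{Fix}T$ implies $R_{\gamma B}x\in(A+B)^{-1}(0)$; and (ii) the exact relaxed step $x\mapsto x+\nu(Tx-x)$ coincides with the bracketed expression in the statement when $a_k=b_k=0$, which is direct substitution using $C_{\gamma B}x=2R_{\gamma B}x-x$ and $C_{\gamma A}=2R_{\gamma A}-Id$.

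Next I would absorb the errors. Comparing the perturbed update with the exact one and using nonexpansiveness of $R_{\gamma A}$ together with the triangle inequality, I can write $x_{k+1}=x_k+\nu_k\big(Tx_k-x_k+e_k\big)$ with $\|e_k\|\le\|a_k\|+\|b_k\|=:\varepsilon_k$, so that $\sum_k\nu_k\varepsilon_k<+\infty$ by hypothesis. This presents the iteration as an inexact Krasnosel'ski\u\i–Mann iteration for the firmly nonexpansive operator $T$.

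Then I would run the quasi-Fej\'er argument. For any $z\in\operatorname{Fix}T$, the averagedness inequality for $T$, after expanding $\|x_{k+1}-z\|^2$ and bounding the error cross-term by Cauchy–Schwarz, yields $\|x_{k+1}-z\|^2\le\|x_k-z\|^2-\nu_k(2-\nu_k)\|Tx_k-x_k\|^2+\nu_k\varepsilon_k\,\rho_k$ with $\rho_k$ controlled by $\sup_j\|x_j-z\|$. Unwinding in the standard order — first boundedness of $\{x_k\}$ from the quasi-Fej\'er property with summable perturbations, then telescoping to get $\sum_k\nu_k(2-\nu_k)\|Tx_k-x_k\|^2<+\infty$, hence (using $\sum_k\nu_k(2-\nu_k)=+\infty$) $\liminf_k\|Tx_k-x_k\|=0$ and in fact $\|x_{k+1}-x_k\|\to0$ — I then use that $Id-T$ is demiclosed at $0$ to show every weak cluster point of $\{x_k\}$ lies in $\operatorname{Fix}T$, and Opial's lemma to upgrade this to weak convergence $x_k\rightharpoonup x\in\operatorname{Fix}T$. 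Finally $R_{\gamma B}x\in(A+B)^{-1}(0)$ by fact (i).

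The main obstacle is the bookkeeping in the quasi-Fej\'er inequality: one must carry $e_k$ through the averagedness identity for $T$ and bound the cross-terms uniformly, which needs boundedness of $\{x_k\}$ — established from the same inequality, so the logic is mildly circular and must be sequenced carefully (boundedness, then summability of the residuals, then asymptotic regularity, then Opial). Everything else — the nonexpansiveness estimates, demiclosedness of $Id-T$, and the identification $R_{\gamma B}x\in(A+B)^{-1}(0)$ — is routine once the setup is fixed.
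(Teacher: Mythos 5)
This statement is imported verbatim from \citet{combettes2004solving} and the paper offers no proof of it, so there is nothing to compare against on the paper's side; your sketch is essentially the standard argument by which the original reference establishes it (rewrite the recursion as an inexact Krasnosel'ski\u{\i}--Mann iteration for the $\tfrac12$-averaged Douglas--Rachford operator, run the quasi-Fej\'er estimate with summable perturbations, then demiclosedness of $Id-T$ and Opial's lemma), and the logic is sound. One small bookkeeping correction: since $b_k$ enters both inside the argument of $R_{\gamma A}$ (with a factor $2$) and additively outside, nonexpansiveness of $R_{\gamma A}$ gives $\|e_k\|\le\|a_k\|+3\|b_k\|$ rather than $\|a_k\|+\|b_k\|$, which is harmless because summability of $\nu_k(\|a_k\|+\|b_k\|)$ is preserved under constant multiples; likewise, for the demiclosedness step you need $\|Tx_k-x_k\|\to 0$ (not merely $\liminf=0$ or $\|x_{k+1}-x_k\|\to0$), which follows from the approximate monotonicity of $k\mapsto\|Tx_k-x_k\|$ under summable errors.
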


\textbf{Proposition \ref{prop:convergence}}
The sequence $\{u^k\}$ generated by Algorithm~\ref{alg:replaced} converges to the solution of problem~(\ref{eq:inclusion}). 
Furthermore, the sequences $\{x^k\}$ and $\{y^k\}$ also converge to the solution of problem~(\ref{eq:qcp}). 

\begin{proof}
From~(\ref{eq:lemma1}), let $\Phi_k = R_F + \epsilon^k$, where $\epsilon^k$ represents the error induced by the inexact evaluation of the resolvant of $F$.
Therefore, the Algorithm~\ref{alg:replaced} can be rewritten as 
$$
\begin{aligned}
w^{k+1}  &= T_k(w^k) = \frac{1}{2}(Id + C_{N_\mathcal{C}}(2\Phi_k -Id))w^k\\
&= \frac{1}{2}w^k + R_{N_\mathcal{C}}(2\Phi_k -Id)w^k - \frac{1}{2}(2\Phi_k -Id))w^k\\
&= w^k + R_{N_\mathcal{C}}\left(2R_F(w^k) - w^k\right) - \left(R_F(w^k) + \epsilon^k\right)
\end{aligned}
$$
By taking $x_{k} = w^k, a_k=0, b_k=\epsilon^k, \gamma=1, \nu_k=1$ and let $A=R_{N_\mathcal{C}}$ and $B=F$, Algorithm~\ref{alg:replaced} is taking the form of~(\ref{eq:cor5-2}).
To prove the Proposition~\ref{prop:convergence}, we are left to show that $\sum_{k\in\mathbb{N}} \|\epsilon^k\|<+\infty$.

Let $f_k(\tilde{u}) = \frac{1}{2}\|(I+M)\tilde{u} -(w^k-q)\|^2$, line $4-5$ in Algorithm~\ref{alg:replaced} can be written as 
$$
\tilde{u}^{k+1} = \tilde{u}^k - \eta^k \nabla f_k(\tilde{u}^k)
$$
where the step size $\eta^k$ is chosen to satisfy the \textit{Wolfe conditions}~\citep{nocedal1999numerical}, that is 
\begin{equation}
\label{eq:line_search1}
    f_k(\tilde{u}^k) - f_k(\tilde{u}^{k+1}) \geq c_1 \eta^k \|\nabla f_k(\tilde{u}^k)\|^2
\end{equation}
\begin{equation}
\label{eq:line_search2}
    \nabla f_k(\tilde{u}^{k+1})^\top \nabla f_k(\tilde{u}^k) \leq c_2 \left\|\nabla f_k(\tilde{u}^k)\right\|^2,
\end{equation}
where $c_1$ and $c_2$ are the parameters for the line search and $0<c_1<c_2<1$.

From~(\ref{eq:line_search2}), we have
$$
\left(\nabla f_k(\tilde{u}^{k+1}) - \nabla f_k(\tilde{u}^k)\right)^\top \nabla f_k(\tilde{u}^k) \leq (c_2-1) \left\|\nabla f_k(\tilde{u}^k)\right\|^2.
$$
$f_k$ is Lipschitz continuous with constant $L=\sigma_{\max}(I+M)$, as $\|\nabla f_k(x) - \nabla f_k(y)\| \leq L\|x-y\|$.
Therefore 
$$
\begin{aligned}
& -(\nabla f_k(\tilde{u}^{k+1}) - \nabla f_k(\tilde{u}^k))^\top \nabla f_k(\tilde{u}^k) \\
& \leq |(\nabla f_k(\tilde{u}^{k+1}) - \nabla f_k(\tilde{u}^k))^\top \nabla f_k(\tilde{u}^k) |\\
&\leq \|(\nabla f_k(\tilde{u}^{k+1}) - \nabla f_k(\tilde{u}^k))\|\|\nabla f_k(\tilde{u}^k)\| \\
&\leq \eta^k L \|\nabla f_k(\tilde{u}^k)\|^2,
\end{aligned}
$$
which implies that
\begin{align*}
    \eta^k &\geq -\frac{(\nabla f_k(\tilde{u}^{k+1}) - \nabla f_k(\tilde{u}^k))^\top \nabla f_k(\tilde{u}^k)}{L \|\nabla f_k(\tilde{u}^k)\|^2 } \\
    & \geq \frac{(1-c_2) \left\|\nabla f_k(\tilde{u}^k)\right\|^2}{L \|\nabla f_k(\tilde{u}^k)\|^2 } = \frac{1-c_2}{L}.
\end{align*}
Therefore
$$
f_k(\tilde{u}^k) - f_k(\tilde{u}^{k+1}) \geq c_1 \eta^k \|\nabla f_k(\tilde{u}^k)\|^2 \geq c_1 \frac{1-c_2}{L} \|\nabla f_k(\tilde{u}^k)\|^2.
$$
Since $f_k(\tilde{u}) \geq 0$
$$
1 - \frac{f_k(\tilde{u}^{k+1})}{f_k(\tilde{u}^k)} \geq c_1 \frac{1-c_2}{L} \frac{\|\nabla f_k(\tilde{u}^k)\|^2}{f_k(\tilde{u}^k)} \geq 2c_1 \frac{1-c_2}{L} \sigma_{\min} (I+M),
$$ 
where $\sigma_{\min} (I+M) \geq 1$ is the minimum singular value of $(I+M)$. (As $M+M'\succeq 0$, the real parts of the eigenvalues of $M$ are nonnegative.)

The last inequality is from:
$$
\nabla f_k(\tilde{u}^k) = (I+M)^\top \left((I+M)\tilde{u}^k - (w^k-q)\right)
$$
$$
\begin{aligned}
    &\frac{\|\nabla f_k(\tilde{u}^k)\|^2}{f_k(\tilde{u}^k)} \\
    &= 2 \frac{\left((I+M)\tilde{u}^k - (w^k-q)\right)^\top (I+M)(I+M)^\top \left((I+M)\tilde{u}^k - (w^k-q)\right)}{\|(I+M)\tilde{u}^k - (w^k-q)\|^2}\\
    &\geq 2\lambda_{\min}\left((I+M)(I+M)^\top\right), \quad \text{(rayleigh quotient)}\\
    &=2\sigma_{\min}(I+M).
\end{aligned}
$$

From Lemma~\ref{lem:conv_repl}, $\sum_{k\in \mathbb{N}} \|w^{k+1} - w^k\|^2 < +\infty$ and $\|w^{k+1} -w^k\| \to 0$. 
Let us further assume that $\eta^k$ also satisfies the assumption in Lemma~\ref{lem:conv_repl}, then $\exists K$, such that $\forall k>K$ $\Phi_{k+1} \approx \Phi_k$ and $2\Phi_k-Id$ is contractive.
Therefore, $\forall k > K, \exists 0<c<1$, such that $\|w^{k+1}-w^k\| \leq c\|w^k - w^{k-1}\|$ and thus $\sum_{k=K} \|w^{k+1}-w^{k}\| < +\infty$.
As $\sum_{k\in\mathbb{N}} \|w^{k+1}-w^k\|^2 < +\infty$, $\sum_{k=0}^{K-1}\|w^{k+1}-w^k\|$ is bounded.
We can conclude that $\sum_{k\in\mathbb{N}}\|w^{k+1}-w^k\| < +\infty$.

Defining $\tau^k = \frac{f_k(\tilde{u}^{k+1})}{f_k(\tilde{u}^k)}$ and taking $0<c_1 < \frac{1}{2}<c_2<1$, then
\begin{align*}
    \tau^k &= \frac{f_k(\tilde{u}^{k+1})}{f_k(\tilde{u}^k)} \leq 1 - 2c_1 \frac{1-c_2}{L} \sigma_{\min} (I+M) = 1 - 2c_1(1-c_2) \frac{\sigma_{\min} (I+M)}{\sigma_{\max} (I+M)}  := \tau < 1.
\end{align*}

Therefore 
\begin{align*}
    \|(I+M)\tilde{u}^{k+1} - (w^k-q)\| \leq \tau \|(I+M)\tilde{u}^k - (w^k-q)\|.
\end{align*}

$$
\begin{aligned}
    \|\epsilon^k\| &= \|\tilde{u}^{k+1} - (I+M)^{-1}(w^k-q)\|  \\
    &\leq \|(I+M)^{-1}\|\|(I+M)\tilde{u}^{k+1} - (w^k-q)\|\\
    &\leq \|(I+M)^{-1}\|\tau \|(I+M)\tilde{u}^{k} - (w^k-q)\|\\
    &\leq \|(I+M)^{-1}\| \tau \|(I+M)\tilde{u}^k - (w^{k-1}-q) + (w^{k-1}-q) -(w^k-q)\|  \\
    & \leq \|(I+M)^{-1}\| (\tau^2 \|(I+M)\tilde{u}^{k-1} - (w^{k-1}-q)\| + \tau \|w^k - w^{k-1}\|) \\
    & \leq \cdots \\
    & \leq \|(I+M)^{-1}\| (\tau^{k+1} \|(I+M)\tilde{u}^0 - (w^0-q)\| + \sum_{j=1}^k \tau^{k-j+1} \|w^j-w^{j-1}\|) \\
    & := \|(I+M)^{-1}\| \left(\sum_{j=0}^k \tau^{k-j+1}\|w^j-w^{j-1}\|\right), \quad \left(\text{denote } w^{(-1)} = (I+M)\tilde{u}^0\right)  \\
    &:= \|(I+M)^{-1}\| a_k.
\end{aligned}
$$
Next, we prove the convergence of $\sum_{k\in\mathbb{N}}\|\epsilon^k\|$ by showing the convergence of $\sum_{k\in\mathbb{N}} a_k$.
$$
\begin{aligned}
    \sum_{k\in\mathbb{N}} a_k = \sum_{k=0}^{\infty}a_k &= \sum_{k=0}^\infty\sum_{j=0}^k \tau^{k-j+1}\|w^j-w^{j-1}\| \\
        &= \sum_{j=0}^\infty\|w^j-w^{j-1}\|\sum_{k=j}^\infty \tau^{k+1-j} \\
        &= \sum_{j=0}^\infty\|w^j-w^{j-1}\| \tau \sum_{k=0}^\infty \tau ^k\\
        &= \sum_{j=0}^\infty\|w^j-w^{j-1}\| \frac{\tau}{1-\tau} < +\infty,
 \end{aligned}
$$
Therefore, $\sum_{k\in\mathbb{N}} \|\epsilon^k\| \leq \sum_{k=0}^\infty a_k < +\infty$. 
Then according to Corollary~\ref{cor:5-2}, we can conclude that $\exists w^*$, such that $w^k \rightharpoonup  w^*$.
As in a finite-dimensional space strong convergence and weak convergence are equivalent, we can conclude that $w^k \to w^*$.
In addition, $u^* = \tilde{u}^* = R_F(w^*) \in (F+N_\mathcal{C})^{-1}(0)$.
\end{proof}

\section{SCS Settings}
\label{appendix:solver_settings}
The parameter settings for SCS in the experiments are listed in Table~\ref{tab:solver_settings}:

\begin{table}[ht]
    \centering
    \caption{Solver configurations.}
    \label{tab:solver_settings}
    \resizebox{1\columnwidth}{!}{
    \begin{tabular}{l c c c c c c c}
        \toprule
        \bf Instance & data scaling & dual scale factor & adapt dual scale & primal scale factor & alpha & acceleration lookback & acceleration interval \\
        \midrule
        \textbf{QP (RHS)/QP}    & False     & 1     & False          & 1      & 1     & 0   &0 \\ 
        \textbf{QPLIB/Portfolio}         & True      &0.1    & True           & $10^{-6}$ & 1.5 & 10 & 10 \\
        \bottomrule
    \end{tabular}
    }
\end{table}

\section{Offline Time}
\begin{table*}[ht]
	\centering
	\caption{Time for collecting training and validation instances and training model (measured in seconds).}
	\label{tab:offline_time}
	\begin{tabular}{@{}cl l l}
	\toprule
        \multicolumn{2}{c}{\bf Instance} & \multicolumn{1}{c}{\bf Data Collection}  & \multicolumn{1}{c}{\bf Training} \\
        \midrule 
        \multirow{3}{*}{\textbf{QP(RHS)}}
        &200 & 6    &  215    \\
        &500 & 42   &  481    \\
        &1000 & 296 &  1007     \\
        \midrule
        \multirow{3}{*}{\textbf{QP}}
        &200 & 5    &   108  \\
        &500 & 39   &   286      \\
        &1000 & 145 &   1422    \\
        \midrule
        \multirow{4}{*}{\textbf{QPLIB}}
        &3913 & 19  &  505   \\
        &4270 & 785 &  3598  \\
        &8845 & 1355&  5992  \\
        &3547& 2885 &  8388  \\
        \midrule
        \multirow{4}{*}{\textbf{Portfolio}}
        &100 & 85   &        2875\\
        &200 & 370  &        5221\\
        &300 & 948  &        8372\\
        &400& 2579  &        10214\\
        \bottomrule
	\end{tabular}
\end{table*}

\section{Analysis of Number of Gradient Steps}
In Table~\ref{tab:multistep_unroll}, we compare the number of iterations required for convergence using the same experimental settings as in Section~\ref{sec:dr-gd}, but on newly generated instances. The solution quality follows a similar trend to the results presented in Table~\ref{tab:dr_replaced} and is therefore omitted here for brevity.
We unrolled the DR-GD algorithm (Algorithm 2) with 1, 2, 3, 4, 5, and 10 gradient steps and compared the number of iterations against the original DR splitting algorithm (Algorithm 1).

Despite the notable reduction in iterations with additional GD steps, we chose to unroll the algorithm with only a single step in the manuscript. This design was made to avoid a complicated inner-outer architecture, as mentioned in Section~\ref{sec:dr_gd_algo}, and to maintain a simpler and more straightforward structure for the unrolled network.

\begin{table}[h!]
	\centering
        \caption{Convergence comparison between Algorithm 1 and Algorithm 2 with different numbers of gradient steps per-iteration.}
	\label{tab:multistep_unroll}
        \begin{tabular}{@{}ll cccccc c}
	\toprule
        \multicolumn{2}{c}{\multirow{2}{*}{\bf Instance}} & \multicolumn{6}{c}{\bf Algorithm~\ref{alg:replaced}}  &\multirow{2}{*}{\bf Algorithm~\ref{alg:dr}}  \\
        \cmidrule(l){3-8} 
		&& step=1 & step=2 & step=3 & step=4 & step=5& step=10  & \\ 
        \midrule
        \multirow{3}{*}{\textbf{QP}}
        &200&	7447&	6093&	5943&	5907&	5876&	5840&	5815\\
        &500	&18427	&14723	&14171	&14025	&13918	&13780	&13646\\
        &1000	&22539	&17788	&17015	&16817	&16671	&16502&	16358 \\
	\bottomrule
	\end{tabular}
\end{table}

We investigated whether unrolling more gradient descent (GD) steps per layer improves DR-GD Net performance, with results shown in Table~\ref{tab:unroll_steps_quality}. We tested versions with 1, 2, 5, and 10 inner GD steps (see Algorithm~\ref{alg:network_multistep}) on validation set of QP(RHS) and observed a clear trade-off. 
From the table, we observe that the most significant solution time reductions are achieved with 1 to 2 GD steps per layer and solution quality deteriorates as step count increases, evidenced by rising maximum violation values and growing $\ell_2$-error norms.
These results demonstrate that our current single-step approximation yields approximate solutions of higher quality as well as better performance boost.

\begin{algorithm}[h!]
\caption{DR-GD Net with multi-gradient steps}\label{alg:network_multistep}
\begin{algorithmic}[1]
\STATE \textbf{Input:} $M, q, \mathcal{C}$, number of layers $L$ and embedding size $d^\ell$.
\STATE \textbf{Initialize:} $\tilde{\mathbf{u}}^0 \gets \mathbf{0}, \mathbf{u}^0 \gets \Pi_\mathcal{C}(-\mathbf{q}), \mathbf{w}^0 \gets q \cdot \mathbf{1}^{d^0} + \mathbf{u}^0$, unroll\_steps; 
\FOR{$ \ell=0,\cdots, L-1$}
    \STATE Update $\tilde{u}$: $w'=\mathbf{w}^{\ell}U_w^\ell-q\cdot\mathbf{1}^{d^\ell}$ 
    \FOR{$i=1, \cdots, \text{unroll\_steps}$}
        \STATE $\tilde{\mathbf{v}}^{\ell} \gets \tilde{\mathbf{u}}^{\ell}U_{\tilde{u}}^\ell$;
        \STATE $\mathbf{g}^{\ell} \gets (I+M)^\top\left((I+M)\tilde{\mathbf{v}}^{\ell} - w'\right)$;
        \STATE $\tilde{\mathbf{u}}^{\ell} \gets \tilde{\mathbf{v}}^{\ell} - \eta^\ell\sigma\left(\mathbf{w}^{\ell}U_\eta^\ell + b_\eta^\ell\right) \odot \mathbf{g}^{\ell}$;
    \ENDFOR
    \STATE $\tilde{u}^{\ell+1} \gets \tilde{u}^l$
    \STATE Update $u$: $\mathbf{u}^{\ell+1} \gets \Pi_\mathcal{C} \left(2\tilde{\mathbf{u}}^{\ell+1} V_{\tilde{u}}^\ell - \mathbf{w}^{\ell}V_w^\ell \right)$;
    \STATE Update $w$: $\mathbf{w}^{\ell+1} \gets \mathbf{w}^{\ell}W_w^\ell + \left(\mathbf{u}^{\ell+1} W_u^\ell - \tilde{\mathbf{u}}^{\ell+1} W_{\tilde{u}}^\ell \right)$
\ENDFOR
\STATE \textbf{Return} $u \coloneqq \mathbf{u}^L P_u^{L}$
\end{algorithmic}
\end{algorithm}

\begin{table}[h!]
\centering
\caption{DR-GD Net with different unrolling of gradient steps per-iteration.}
\label{tab:unroll_steps_quality}
	\begin{tabular}{@{}l c c c c c c c}
    \toprule
       \multicolumn{1}{c}{\bf Instance} & \multicolumn{1}{c}{\bf Unroll Steps} & \multicolumn{1}{c}{\bf Iter. Ratio} &\multicolumn{1}{c}{\bf Time Ratio}& \multicolumn{1}{c}{\bf Obj.} &\multicolumn{1}{c}{\bf Max Viol.} &\multicolumn{1}{c}{\bf $\ell_2$ Norm} \\
        \midrule
        \multirow{4}{*}{\textbf{QP(RHS) 200}}
        & 1 &40.4\% & 36.0\% & -36.730 & 1.264 & 0.002\\
        &2 & 40.7\% & 34.6\%& -36.807 & 1.929 & 0.004 \\
        &5 & 39.7\% & 29.7\&& -37.120 & 1.977 & 0.004 \\
        &10& 37.0\% & 22.7\%& -36.426 & 2.048 & 0.005 \\
        \midrule
        \multirow{4}{*}{\textbf{QP(RHS) 500}}
        &1 & 39.2\% & 37.1\% & -91.272 & 1.366  & 0.001 \\
        &2 & 39.2\% & 38.2\%& -91.721  & 3.477 & 0.003 \\
        &5 & 36.4\% & 34.5\%& -90.479  & 3.539 & 0.004 \\
        &10& -0.8\% & -4.1\%& -85.441 & 29.962 &0.479 \\
	\bottomrule
	\end{tabular}
\end{table}

\clearpage
\newpage

\section{Testing on larger perturbations}
To assess the model's robustness to larger perturbations, we evaluated a model trained with a perturbation factor of 0.1 on a test set with a 50\% larger perturbation factor (0.15). As shown in Table~\ref{tab:larger_perturb}, performance is maintained despite this distribution shift, providing initial evidence of the model's robustness. QPLIB 3913 was omitted from this analysis due to its limited acceleration on the original dataset.

\begin{table}[h]
\centering
\caption{Testing on larger perturbations.}
\label{tab:larger_perturb}
	\begin{tabular}{@{}ll c cc cc cc}
	\toprule
       \multicolumn{2}{c}{\multirow{2}{*}{\bf Instance}} & \multicolumn{1}{c}{\bf OSQP} & \multicolumn{2}{c}{\bf SCS} &\multicolumn{2}{c}{\bf SCS (Warm Start)} &\multicolumn{2}{c}{\bf Ratio}  \\
        \cmidrule(l){3-3} \cmidrule(l){4-5} \cmidrule(l){6-7} \cmidrule(l){8-9}
		&& Time (s)$\downarrow$ & Iters.$\downarrow$ & Time (s)$\downarrow$ & Iters.$\downarrow$ & Time (s)$\downarrow$ & Iters. $\uparrow$& Time $\uparrow$ \\ 
        \midrule
        \multirow{3}{*}{\textbf{QPLIB}}
        &4270 &  0.780 & 4,577   & 1.275 & 2,270  & 0.651 & 44.8\% & 42.5\%  \\
        &8845 &  2.391 & 10,489   & 2.292 & 6,189  & 1.767 & 32.1\% & 30.9\%  \\
        &3547 &  4.220 & 24,201   & 5.134 & 17,073  & 3.642 & 28.9\% & 28.5\%  \\
	\bottomrule
	\end{tabular}
\end{table}

\section{Solution Quality}

The following Table~\ref{tab:sol_quality_baseline} compares the solution quality and the inference time in seconds (\enquote{Inf. Time(s)}) of baseline methods on the QP(RHS) datasets. The results demonstrate that DR-GD-NN generally achieves better objective values and smaller distances to the optimal solutions, especially for larger-sized problems.
While L2WS variants do exhibit marginally better feasibility (\enquote{Max Viol.}), this advantage comes at considerable computational expense (\enquote{Inf. Time}) due to their mandatory feasibility restoration step.

\begin{table}[h]
\centering
\caption{Solution quality of different methods on QP(RHS) datasets.}
\label{tab:sol_quality_baseline}
	\begin{tabular}{@{}l| l| c c c c}
	\toprule
       \multicolumn{1}{c}{\bf Instance} & \multicolumn{1}{c}{\bf Method} & \multicolumn{1}{c}{\bf Obj.} &\multicolumn{1}{c}{\bf Max Viol.} &\multicolumn{1}{c}{\bf $\ell_2$ Norm} &\multicolumn{1}{c}{\bf Inf. Time (s)} \\
        \midrule
        \multirow{5}{*}{\textbf{QP(RHS) 200}}
        &SCS &   -36.688&  -  & - &-\\
        &L2WS(Reg) & -36.611  & 0.002   & 0.001 &0.011\\
        &L2WS(Fp) & -36.437  & 0.001 &0.007 &0.011\\
        &GNN & -38.192 & 6.447  &0.035 &0.002\\
        &DR-GD-NN & -36.730 & 1.264   & 0.002&0.002\\
        \midrule
        \multirow{5}{*}{\textbf{QP(RHS) 500}}
        &SCS &  -91.176 & -  & - &-\\
        &L2WS(Reg) & -90.850 & 0.001   &0.002&0.072\\
        &L2WS(Fp) & -0.397  & 0.987 &0.839&0.072\\
        &GNN & -66.463 & 17.903   &0.110 &0.005\\
        &DR-GD-NN & -91.272 & 1.366   &0.001 &0.004\\
        \midrule
        \multirow{5}{*}{\textbf{QP(RHS) 1000}}
        &SCS & -160.245  & -    & - &-\\
        &L2WS(Reg) & -84.326 & 0.008  &0.330 &0.303\\
        &L2WS(Fp) & -0.382  & 0.995 &0.681 &0.310\\
        &GNN &  -70.927 & 18.181 &0.530 &0.018\\
        &DR-GD-NN & -160.545 & 1.750 & 0.001 &0.012\\
	\bottomrule
	\end{tabular}
\end{table}

\clearpage
\newpage
\section{Supplementary Experiments on Larger Instances}
In this section, we conduct experiments on QPLIB 8785, featuring problem sizes approximately 10 times larger than those in our main experiments. The specific sizes are detailed in Table~\ref{tab:problem_size_8785}.

The results are reported in Table~\ref{tab:default_settings_8785}. On average, the SCS solver required 1,559 iterations and 14.474 seconds to solve this instance. However, by using the solution from our DR-GD Net as a warm start, the iteration count and solve time for SCS were reduced to 711 and 7.025 seconds, respectively. This corresponds to an improvement of 50.7\% in iterations and 49.7\% in solve time.
These results demonstrate that the proposed method performs effectively and maintains its benefits on significantly larger instances.

\begin{table}[h!]
\caption{Problem size of instance \textbf{QPLIB} 8785}
\label{tab:problem_size_8785}
\begin{center}
\resizebox{0.7\columnwidth}{!}{
\begin{tabular}{@{} >{\raggedright\arraybackslash}p{2cm} 
                  >{\raggedright\arraybackslash}p{1cm} 
                  >{\centering\arraybackslash}p{2.5cm} 
                  >{\centering\arraybackslash}p{2.5cm} 
                  >{\centering\arraybackslash}p{2.5cm} 
                  @{}}
\toprule
\multicolumn{2}{c}{\bf Instance} & \bf $n$ & \bf $m_1$ & \bf $m_2$ \\ 
\toprule
\multirow{1}{*}{\textbf{QPLIB}} & 8785 & 10,399  & 11,362  & 20,798  \\
\bottomrule
\end{tabular}
}
\end{center}
\end{table}

\begin{table*}[h!]
	\centering
	\caption{Performance of DR-GD Net on \textbf{QPLIB} datsets.}
	\label{tab:default_settings_8785}
    \resizebox{1\linewidth}{!}{
	\begin{tabular}{@{}cl c cc cc cccc cc}
	\toprule
        \multicolumn{2}{c}{\multirow{2}{*}{\bf Instance}} & \multicolumn{1}{c}{\bf OSQP} & \multicolumn{2}{c}{\bf rAPDHG} & \multicolumn{2}{c}{\bf SCS} &\multicolumn{4}{c}{\bf SCS (Warm Start)} &\multicolumn{2}{c}{\bf Ratio}  \\
        \cmidrule(l){3-3} \cmidrule(l){4-5} \cmidrule(l){6-7}\cmidrule(l){8-10} \cmidrule(l){11-12}
	&	& Time (s) & Iters. & Time (s)& Iters. & Time (s) & Iters. & Inf. Time (s) & Solve Time (s) & Time (s) & Iters. & Time \\ 
        \midrule
        \multirow{1}{*}{\textbf{QPLIB}}
        &8785 & 4.255 &15,113 & 254.6 & 1,559    & 14.474 & 711     & 0.146 & 7.025 &  7.171 & 50.7\% & 49.7\%  \\
	\bottomrule
	\end{tabular}
    }
\end{table*}

\clearpage
\newpage
\section{Comparison with DR-GD}

The following table compares the solutions' objective values (\enquote{Obj.}), feasibility satisfaction (\enquote{Max Viol.}) and distances to optimal solutions (\enquote{$\ell_2$ Norm}) on the QP(RHS) datasets. DR-GD($n$) represents the case where DR-GD stops after $n$ iterations, while DR-GD-NN denotes our DR-GD neural network.
The results demonstrate that DR-GD-NN produces solutions with objective values (\enquote{Obj.}) and proximity to optimal solutions (\enquote{$\ell_2$ Norm}) quite close to those obtained from DR-GD(5000), while maintaining acceptably low constraint violations. Most notably, our DR-GD Net achieves this comparable solution quality using only 4 layers, representing a substantial improvement in computational efficiency over the conventional approach requiring 5000 iterations. 
These findings validate that our network architecture can effectively match the performance of standard DR-GD while requiring significantly fewer computational resources.

\begin{table}[h!]
    \centering
        \caption{Comparison of solution quality between DR-GD and DR-GD Net}
    \label{tab:compare_dr_gd}
    \begin{tabular}{@{}c l l l l}
    \toprule
    \bf Instance & \bf Method & \bf Obj. & \bf Max Viol. & \bf $\ell_2$ Norm \\
    \toprule
     \multirow{7}{*}{\textbf{QP(RHS) 200}} &
DR-GD(4)&6.526 &5.101  &0.913 \\
&DR-GD(10)&9.615 &3.774  &0.995 \\
&DR-GD(100)&-18.586 &2.178  &0.453 \\
&DR-GD(500)&-33.726 &0.271  &0.052 \\
&DR-GD(1000)&-35.248 &0.048  &0.009 \\
&DR-GD(5000)&-35.525 &\bf 0.000  &\bf 0.000 \\
&DR-GD-NN&\bf -36.730 &1.264  &0.002 \\
\midrule
 \multirow{7}{*}{\textbf{QP(RHS) 500}} &
DR-GD(4)&19.718 &11.194  &1.062 \\
&DR-GD(10)&30.545 &8.917  &1.167 \\
&DR-GD(100)&1.556 &3.449  &0.977 \\
&DR-GD(500)&-68.233 &1.358  &0.287 \\
&DR-GD(1000)&-83.588 &0.378  &0.118 \\
&DR-GD(5000)&-92.731 &\bf 0.009  &0.001 \\
&DR-GD-NN&\bf -91.272 &1.366  &\bf 0.001 \\
\midrule
\multirow{7}{*}{\textbf{QP(RHS) 1000}} &
DR-GD(4)&43.140 &17.593  &1.019 \\
&DR-GD(10)&69.374 &13.780  &1.140 \\
&DR-GD(100)&23.985 &5.808  &0.980 \\
&DR-GD(500)&-116.195 &2.022  &0.314 \\
&DR-GD(1000)&-147.401 &0.563  &0.143 \\
&DR-GD(5000)&-167.809 &\bf 0.014  &0.002 \\
&DR-GD-NN&\bf -160.545 &1.750 &\bf 0.001 \\
\bottomrule
\end{tabular}
\end{table}

\clearpage
\newpage
\section{Ablation on Number of Layers}
To systematically evaluate the impact of the number of layers, we trained DR-GD Net with 1 to 6 layers on the QP(RHS) datasets and measured the resulting reduction ratios in both iteration count and solve time. Figure~\ref{fig:layer_ab} shows the resulting reduction ratios in iteration count and solve time on the validation set.
The computational performance generally improves as the number of layers increases. However, the improvements become marginal beyond 4 layers. 
Therefore, to balance performance and model complexity, we chose to use a 4-layer architecture for our experiments.

\begin{figure}[h]
\centering
\begin{subfigure}[b]{0.8\textwidth}
   \includegraphics[width=1\linewidth]{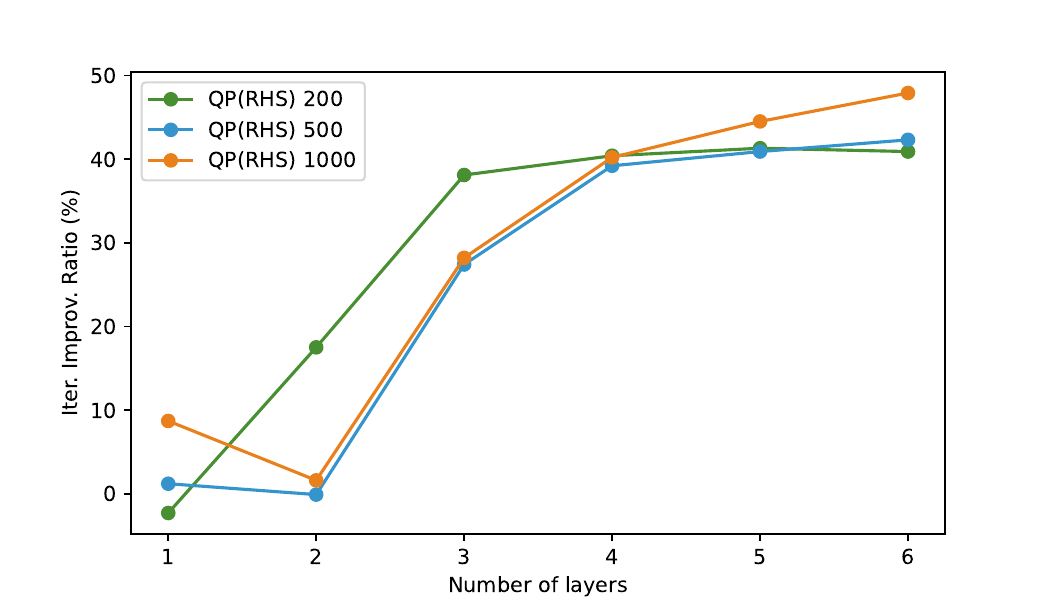}
   \caption{}
   \label{fig:layer_ab_iter} 
\end{subfigure}

\begin{subfigure}[b]{0.8\textwidth}
   \includegraphics[width=1\linewidth]{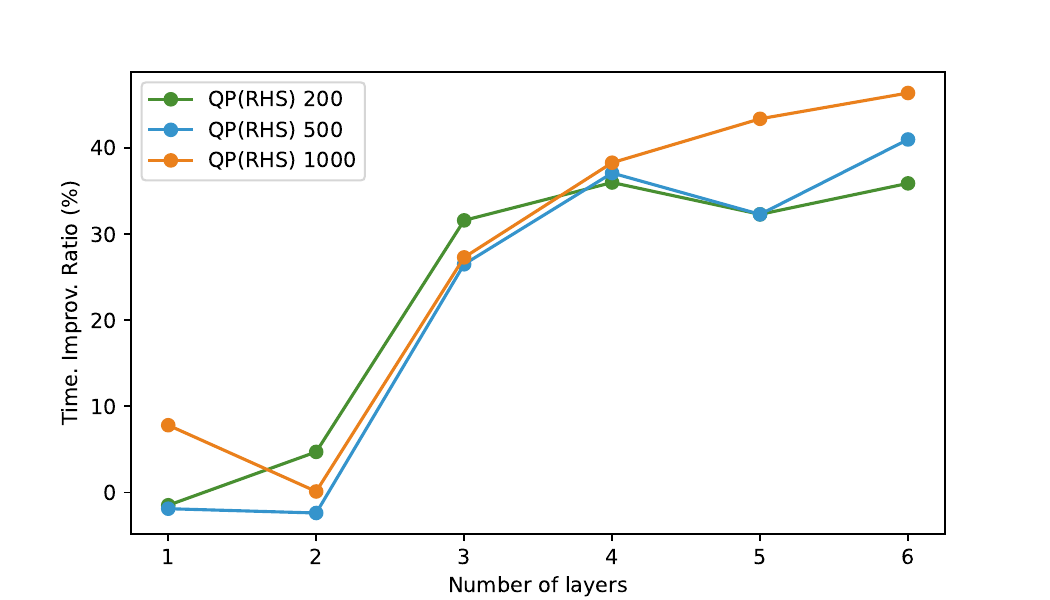}
   \caption{}
   \label{fig:layer_ab_time}
\end{subfigure}

\caption{Improvement ratios for iteration count~(\ref{fig:layer_ab_iter}) and solve time~(\ref{fig:layer_ab_time}) on the QP(RHS) validation sets, evaluated for models with different numbers of layers.}
\label{fig:layer_ab}
\end{figure}

\end{document}